\DeclareMathOperator{\prob}{\mathbb{P}}
\DeclareMathOperator{\expect}{\mathbb{E}}
\newcommand{\R}{\ensuremath{\mathbb{R}}}
\newtheorem{theorem}{Theorem}
\newtheorem{lemma}{Lemma}
\newtheorem{proposition}{Proposition}
\newtheorem{corollary}{Corollary}
\theoremstyle{definition}
\DeclareMathOperator{\pbinom}{Bin}
\DeclareMathOperator{\ppois}{Po}
\DeclareMathOperator{\pbeta}{Beta}
\DeclareMathOperator{\pnorm}{N}
\DeclareMathSymbol{\shortminus}{\mathbin}{AMSa}{"39}
\DeclareSymbolFont{ttfont}{OT1}{cmtt}{m}{n}
\DeclareMathSymbol{\splus}{\mathord}{ttfont}{`+}
\DeclareMathSymbol{\sminus}{\mathord}{ttfont}{`-}
\DeclareMathSymbol{\snull}{\mathord}{ttfont}{`0}
\begin{document}

\title{Another look at binomial and related distributions exceeding values close to their centre}

\author{Tilo Wiklund}
\date{\small Department of Mathematics, Uppsala University, SE-751 06 Uppsala, Sweden}

\maketitle

\begin{abstract}
  We generalise the known fact that for binomial \(X_{n,k} \sim \pbinom(n,
  k/n)\) one has \(\inf_{k>1,n} \prob(X_{n,k} \geq k) \geq \lim_{k \to
    1+}\prob(X_{2,k} \geq k) = 1/4\) to cover probabilities of exceeding a
  constant shift away from the mean. The proof is very short and the theorem
  yields the original result as a special case, as well as proving that an analogous
  result holds for the Poisson distribution. We furthermore prove a similar
  property holds for the family of beta distributions near their mode. Thanks to
  the connection between Binomial and Beta distributions, this allows us to shed
  some further light on the original result regarding Binomial probabilities.
\end{abstract}

\section{Introduction}

For binomial random variables \(X_{n,\mu} \sim \pbinom(n, \mu/n)\) with
expectations \(\expect(X_{n,\mu}) = \mu \geq 1\) it was proved by
\citeauthor{Greenberg2014}\cite{Greenberg2014} that \(\prob(X_{n,\mu} \geq \mu)
\geq 1/4\). While a lot is known concerning the concentration behaviour of, say,
sums of independent random variables, inequalities of the kind above appear
somewhat more obscure. Nevertheless, they have recently garnered some interest.
For example, a more elementary proof of the above statement was given by
\citeauthor{Doerr2018}\cite{Doerr2018} and other results of similar type can be
found in Lemma~13 of \citeauthor{Rigollet2011}\cite{Rigollet2011} as well as the
main results of \citeauthor{Pelekis2016b}\cite{Pelekis2016b}. A closely related
question related to the binomials was solved by
\citeauthor{janson2021}\cite{janson2021} and
\citeauthor{barabesi2023}\cite{barabesi2023}, who dealt with the question of
which \(\mu\) yields the extremal probability for any given \(n\).

Inequalities of this type turn out to be useful in learning theory and the
analysis of algorithm. For details we refer to the references given by
\citeauthor{Doerr2018}. The author recently had to control such \emph{skewness}
relative to the expected value for a certain family of probability
measures\cite{Wiklund2018}. Though the quantity of interest was not an
expectation, the problem was, in spirit, as follows. Say that a function \(f\) is
concave on the support of some family \((X_{\theta})_{\theta \in \Theta}\) of
random variables. By Jensen's inequality we know \[\expect(f(X_{\theta})) \leq
  f(\expect(X_{\theta})).\] If \(f\) is also non-negative and non-decreasing then \[\expect(f(X_{\theta})) \geq
  \expect(f(X_{\theta})\mathbf{1}(X_{\theta} \geq \expect(X_{\theta}))) \geq
  f(\expect(X_{\theta}))\prob(X_{\theta} \geq \expect(X_{\theta})).\] This very
cheap lower bound is proportional to the upper bound from Jensen's inequality as
long as the family \((X_{\theta})_{\theta \in \Theta}\) never gets \emph{too
  (positively) skewed}. Specifically when \(\prob(X_{\theta} \geq m_{\theta})\)
stays away from \(0\) where \(m_{\theta} = \expect(X_{\theta})\). More generally,
one may ask this question about any \(m_{\theta}\) too much in the centre of the
distribution for any standard concentration inequalities to be applicable.

By observing that the result of \citeauthor{Greenberg2014} is a special case of
an inequality previously used by \citeauthor{Anderson1967}\cite{Anderson1967}
and \citeauthor{Jogdeo1968}\cite{Jogdeo1968}, resulting from a (less famous)
inequality of Hoeffding's\cite{Hoeffding1956}, we give a natural generalisation
of \citeauthor{Greenberg2014}'s result. This generalisation covers probabilities
\(\prob(X_{n,\mu} \geq \mu + l)\) and gives the sharp refinement where the lower
bound is allowed to depend on \(\mu\). The proofs are almost trivial once one
realises the applicability of Hoeffding's result.

The bound for \(l = 1\) was included in \citeauthor{Doerr2018}, but proved by
different means. While lower bounds for more general \(l > 0\) have also been
given by \citeauthor{Pelekis2016}\cite{Pelekis2016}, these are not directly
comparable. We identify the minimising (sequence of) parameters, yielding sharp
bounds when the lower bound is allowed to depend on (at most) \(\mu\) while the
bound of \citeauthor{Pelekis2016} depends on both \(n\) and \(\mu\) but is not
exact at the minimum. Similarly to \citeauthor{Pelekis2016b}\cite{Pelekis2016b}
this method yields analogous bounds also for the Poisson distribution.

An example of such results for a family of continuous distributions can be found
in \citeauthor{Arab2021convex}\cite{Arab2021convex}. There, results were given
for the family of Beta distribution when one parameter is increased and the
other is decreased. We prove here, by rather different means, monotonicity
results for the family of Beta distributions near their mode (when it exists),
when both parameters increase or both parameters decrease. As a
special case we find that for a fixed mode the probability of being on either
side of the mode tend monotonically to \(1/2\) as both parameters increase.

The proof concerning binomial probabilities almost says that quantities of the
type \(\prob(X_{n,\mu} \geq \mu + l)\) are monotone in \(n\). The failure to
actually be monotonic is in some sense an artefact of the discreteness of the
Binomial distribution. It is well known that probabilities relating to binomial
distributions can be formulated in terms of probabilities related to beta
distributions. This connection yields a smooth interpolation of
\(\prob(X_{n,\mu} \geq \mu + l)\) in terms of an analogous quantity for beta
distributions.

Numerical experiments suggest this quantity is actually monotone in \(n\). Using
our proof concerning monotonicity of beta probabilities near their mode actually
proves this fact, but only for certain ranges of \(\mu\) and \(l\). This gives
some additional clarity to the statement about binomial probabilities and yields
smooth bounds in terms of \(\mu/n\).

\section{Main Results}

Throughout this section we will have \(X_{n,\mu} \sim \pbinom(n, \mu/n)\) denote
binomial random variables with expected values \(\expect(X_{n,\mu}) = \mu\) and
\(Z_{\lambda} \sim \ppois(\lambda)\) Poisson random variables with expected
values \(\expect(Z_{\lambda}) = \lambda\). In other words \(\prob(X_{n,\mu} = k)
= n^{-n}\binom{n}{k}k^{\mu}(n - k)^{n - \mu}\) for \(k = 0, 1, \dotsc, n\) and
\(\prob(Z_{\lambda} = k) = \lambda^{k}e^{-\lambda}/k!\) for \(k = 0, 1,
\dotsc\). Our objective is to bound, from below, \(\prob(X_{n,\mu} \geq \mu +
l)\) for ``small'' \(l\). Examples of such probabilities are given in
Figure~\ref{fig:fig1}. We begin by making a number of observations.

For fixed \(p = \mu/n\) and \(n\) tending to infinity one has
\(\lim_{n\to\infty} \prob(X_{n,\mu} \geq \mu+l) = 1/2\) by the central limit
theorem while for \(\mu\) fixed with \(n\) tending to infinity one finds a
limiting Poisson probability \(\prob(Z_{\mu} \geq \mu+l)\). Similarly by the
central limit theorem \(\lim_{\lambda \to \infty}\prob(Z_{\lambda} \geq
\lambda+l) = 1/2\). As far as these asymptotics are concerned there is
therefore not much to say.

Finding a sharp lower-bound depending only on \(l\), independent of \(n\) and
\(\mu\), is equivalent to identifying minimising \(n\) and \(\mu\). A
non-trivial global lower bound of this type is not possible, since \(\lim_{\mu
  \to 0+} \prob(X_{n,\mu} \geq \mu) = \lim_{\mu \to 0+} \prob(X_{n,\mu} \neq 0)
= 0\) and certainly \(\prob(X_{n,n-l} \geq (n-l)+l+1) = \prob(X_{n,n-l} \geq
n+1) = 0\). In the result of \citeauthor{Greenberg2014} this is circumvented by
the constraint \(\mu \geq 1\). In light of the above observation, in generalising
their result, we will have to assume \(\mu \leq n-l\). For \(l = 0\), the
special case of \citeauthor{Greenberg2014}, this is of course no additional
constraint.

\begin{figure}[ht]
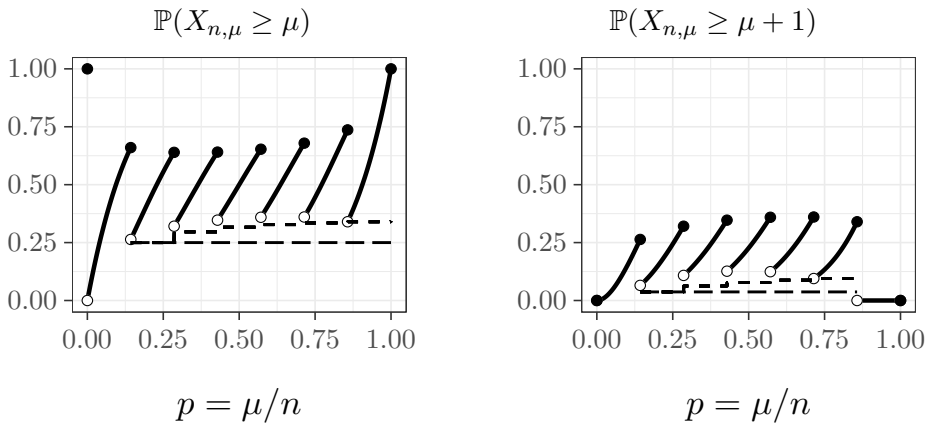

  \centering
  \begin{minipage}{0.46\textwidth}
    \centering{}
    \(\prob(X_{n,\mu} \geq \mu)\)
    \mbox{\hspace{-1.25cm}\input{pictures/plot1.tex}}
  \end{minipage}
  \begin{minipage}{0.46\textwidth}
    \centering{}
    \(\prob(X_{n,\mu} \geq \mu+1)\)
    \mbox{\hspace{-0.5cm}\input{pictures/plot2.tex}}
  \end{minipage}
  \caption{Probability of binomial random variable with \(n = 5\) trials being
    above (left) and at least \(1\) above (right) expected value \(k = pn\), with
    the bounds from Corollary~\ref{cor:binom2} marked by dashed lines}
  \label{fig:fig1}
\end{figure}

Since \(X_{n,\hspace{0.3mm}\mu}\) is supported on \(\{ 0, 1, \dotsc, n \}\) we
have \(\prob(X_{n,\hspace{0.3mm}\mu} \geq \mu+l) = \prob(X_{n,\hspace{0.3mm}\mu}
\geq \lceil \mu+l \rceil)\) and \(\prob(X_{n,\hspace{0.3mm}\mu} \leq \mu-l) =
\prob(X_{n,\hspace{0.3mm}\mu} \leq \lfloor \mu-l \rfloor)\), where \(\lceil x
\rceil\) and \(\lfloor x \rfloor\) denotes the least integer greater and
greatest integer less than \(x\). This gives rise to the jump discontinuities
at integer values of \(\mu + l\) seen in Figure~\ref{fig:fig1}. By a standard
coupling argument \(X_{n,\mu}\) is increasing in \(\mu\) with respect to (the
usual) stochastic (dominance) order (see for example \cite{Shaked2007}). We need
therefore only study \(\prob(X_{n,\mu} \geq \mu+l)\) for \(l \geq 1\) and
integer \(\mu+l\). These observations are essentially the same as in the
previously mentioned papers of \citeauthor{Greenberg2014} and
\citeauthor{Doerr2018}.

For readability we will restrict ourselves to integer \(\mu\) and \(l\). The
results hold more generally since discontinuities appear only at integer
values of \(\mu+l\). Both Theorem~\ref{thm:binomial1} and the later
Theorem~\ref{thm:poisson1} are essentially special cases of the consequence of
the inequality of Hoeffding\cite{Hoeffding1956} previously used by
\citeauthor{Anderson1967}\cite{Anderson1967}.

\begin{samepage}
  \begin{theorem}
    \label{thm:binomial1}
    For integer \(l=1,2,\dotsc\), \(\mu=1,\dotsc,n-l\) and
    \(X_{n,\hspace{0.3mm}\mu} \sim \pbinom(n, \mu/n)\) it holds that
    \begin{equation*}
      \begin{split}
        1/2
        \geq
        \prob(X_{n,\hspace{0.3mm}\mu} \geq \mu+l)
        &\geq
        \prob(X_{\mu+l,\hspace{0.3mm}\mu} = \mu+l) = \left( \frac{\mu}{\mu+l} \right)^{\mu+l}
        \\
        &\geq
        \prob(X_{1+l,1} = 1+l) = \frac{1}{(1+l)^{1+l}}.
      \end{split}
    \end{equation*}
  \end{theorem}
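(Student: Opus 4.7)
The plan is to obtain all four inequalities from (i) Hoeffding's 1956 inequality comparing a binomial with same-mean Poisson-binomial distributions, (ii) the classical mean-equals-median property of binomials with integer mean, and (iii) a short calculus check.

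For the upper bound $\prob(X_{n,\mu} \geq \mu+l) \leq 1/2$, I would invoke the fact that when $\mu$ is an integer the mean of $\pbinom(n,\mu/n)$ coincides with a median, so $\prob(X_{n,\mu} \leq \mu) \geq 1/2$ and hence $\prob(X_{n,\mu} \geq \mu+1) \leq 1/2$. Since $l \geq 1$ the statement for $\mu+l$ follows from monotonicity of the tail.

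For the central lower bound I would invoke Hoeffding's inequality in its form: if $S = Y_1 + \cdots + Y_n$ is a sum of independent Bernoulli variables with $\expect(S) = \mu$, then for any integer $k \geq \mu$
\begin{equation*}
  \prob(S \geq k) \leq \prob(B \geq k), \qquad B \sim \pbinom(n, \mu/n),
\end{equation*}
so among Poisson-binomial laws on $n$ trials with fixed mean $\mu$, the symmetric (equal-probability) one maximises the upper tail past the mean. I would then apply this with the specific choice of a \emph{degenerate} configuration: take $\mu+l$ of the $p_i$ equal to $\mu/(\mu+l)$ and the remaining $n - (\mu+l)$ equal to $0$. The associated sum $S$ has mean $\mu$ and is distributed as $X_{\mu+l,\mu}$, giving
\begin{equation*}
  \prob(X_{n,\mu} \geq \mu+l) \geq \prob(S \geq \mu+l) = \prob(X_{\mu+l,\mu} = \mu+l) = \bigl(\mu/(\mu+l)\bigr)^{\mu+l},
\end{equation*}
where the middle equality is because $S$ can reach $\mu+l$ only by all $\mu+l$ non-trivial Bernoullis succeeding simultaneously. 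Since $l \leq n - \mu$ there really are $\mu + l \leq n$ slots to place the non-zero $p_i$ into, so the configuration is admissible.

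The final step is to show $\mu \mapsto (\mu/(\mu+l))^{\mu+l}$ is non-decreasing in $\mu$ for $\mu \geq 1$. Taking logarithms, this reduces to showing $\frac{d}{d\mu}[(\mu+l)\log(\mu/(\mu+l))] \geq 0$, which after simplification becomes $l/\mu - \log(1 + l/\mu) \geq 0$, true for all $l/\mu > 0$ by the standard inequality $t \geq \log(1+t)$. Evaluating at $\mu = 1$ then yields $1/(1+l)^{1+l}$. The only subtle point in the whole argument is being careful to state Hoeffding's inequality in the correct direction and to verify that the degenerate configuration is a valid Poisson-binomial on $n$ terms, which is exactly where the hypothesis $\mu \leq n - l$ is used.
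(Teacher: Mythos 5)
Your proposal is correct, and all three inequalities check out. The heart of it is the same as the paper's: Hoeffding's 1956 theorem comparing a binomial with Poisson--binomial laws of the same mean, applied to a degenerate probability vector containing zeros. The difference is one of packaging. The paper first extracts a one-step corollary (padding with a single \(0\) or a single \(1\)) and then iterates it twice: once to walk \(n\) down to \(\mu+l\) at fixed \(\mu\), giving \(\prob(X_{n,\mu}\geq\mu+l)\geq\prob(X_{\mu+l,\mu}=\mu+l)\), and once more to walk \((\mu+l,\mu)\) down to \((l+1,1)\). You telescope the first chain into a single application of Hoeffding with the vector having \(\mu+l\) entries equal to \(\mu/(\mu+l)\) and the rest zero, which is cleaner for this one statement; and you replace the second chain entirely by an elementary calculus check that \(\mu\mapsto(\mu/(\mu+l))^{\mu+l}\) is non-decreasing via \(t\geq\log(1+t)\), which is self-contained but discards the intermediate probabilistic comparisons. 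For the upper bound the paper iterates its corollary in the other direction and passes to a limit, whereas you invoke the integer-mean median theorem for binomials; both rest on known results, and yours is arguably the more direct citation. What the paper's route buys is uniformity: the single corollary it iterates is exactly the ``lattice'' of monotonicity in \(n\) that motivates the later beta-distribution interpolation and is reused verbatim for the Poisson analogue, so the extra structure is not wasted. Two small points of precision in your write-up: Hoeffding's interval inequality requires the upper endpoint \(b\geq\mu\), so your tail comparison holds for integer \(k\geq\mu+1\) rather than \(k\geq\mu\) (harmless here since \(l\geq1\)); and the admissibility of the degenerate configuration uses \(\mu+l\leq n\), which you correctly identify as the role of the hypothesis \(\mu\leq n-l\).
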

\end{samepage}

While the upper bound is a well known folklore theorem it also follows directly
from the proof technique. Since the lower bounds on
\(\prob(X_{n,\hspace{0.3mm}\mu} \geq \mu+l)\) explicitly identify the minimising
parameters they are, by construction, sharp.

The problem has a certain amount of symmetry given by the fact that \(n -
X_{n,\hspace{0.3mm}\mu}\) has the same distribution as \(X_{n,n-\mu}\). Any
result concerning \(\prob(X_{n,\mu} \geq \mu + l)\) therefore translates into
one concerning \(\prob(X_{n,\hspace{0.3mm}\mu} \leq \mu - l) = \prob(X_{n,n-\mu}
\geq n - \mu + l) = 1 - \prob(X_{n,n-\mu} \leq n - \mu + l - 1)\).

What follows in Corollary~\ref{cor:binom2} summarises the basic conclusions one
may now derive from Theorem~\ref{thm:binomial1}.
 
\begin{corollary} \label{cor:binom2} For \(l = 0, 1, \dotsc\), \(n = l+1, l+2,
  \dotsc\), and \(X_{n,\mu} \sim \pbinom(n, \mu/n)\) it holds that
  \begin{enumerate}[a)]
  \item for \(1 \leq \mu \leq n-l\) one has \label{cor:binom2part1}
    \begin{equation*}
      \prob(X_{n, \hspace{0.3mm}\mu} \geq \mu + l)
      \geq
      \left( \frac{\lfloor\mu\rfloor}{\lfloor\mu\rfloor+l+1} \right)^{\lfloor\mu\rfloor+l+1}
      \geq
      \frac{1}{(2+l)^{2+l}},
    \end{equation*}
  \item if \(l \leq \mu \leq n-1\) one has \label{cor:binom2part2}
    \begin{equation*}
      \prob(X_{n, \hspace{0.3mm}\mu} \leq \mu - l)
      \geq
      \left( \frac{n-\lfloor\mu\rfloor}{n-\lfloor\mu\rfloor+l+1} \right)^{n-\lfloor\mu\rfloor+l+1}
      \geq
      \frac{1}{(2+l)^{2+l}}.
    \end{equation*}
  \end{enumerate}
\end{corollary}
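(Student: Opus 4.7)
The plan is to reduce the general claim to an application of Theorem~\ref{thm:binomial1} via the discretization of \(\prob(X_{n,\mu} \geq \mu+l)\) at integer thresholds, combined with stochastic monotonicity. The trick is that moving from a real \(\mu\) to its floor forces the shift \(l\) to be bumped up to \(l+1\), which is exactly why \(l+1\) appears in the claimed bounds.

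For part~(a), I would first use \(\prob(X_{n,\mu} \geq \mu+l) = \prob(X_{n,\mu} \geq \lceil\mu+l\rceil)\) together with the (always valid) inequality \(\lceil\mu+l\rceil \leq \lfloor\mu\rfloor + l + 1\), and then invoke stochastic monotonicity \(X_{n,\mu} \succeq X_{n,\lfloor\mu\rfloor}\) to get
\[
  \prob(X_{n,\mu} \geq \mu + l) \;\geq\; \prob(X_{n,\lfloor\mu\rfloor} \geq \lfloor\mu\rfloor + (l+1)).
\]
With \(\mu' = \lfloor\mu\rfloor \geq 1\) and shift \(l' = l+1 \geq 1\) both integer, Theorem~\ref{thm:binomial1} supplies the lower bound \((\lfloor\mu\rfloor/(\lfloor\mu\rfloor + l + 1))^{\lfloor\mu\rfloor + l + 1}\), provided \(\lfloor\mu\rfloor + l + 1 \leq n\). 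Part~(b) is then immediate from the symmetry \(n - X_{n,\mu} \sim X_{n, n-\mu}\), which gives
\[
  \prob(X_{n,\mu} \leq \mu - l) = \prob(X_{n, n-\mu} \geq (n-\mu) + l),
\]
to which part~(a) applies with mean \(n - \mu\); the hypothesis \(l \leq \mu \leq n-1\) is precisely \(1 \leq n - \mu \leq n - l\).

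For the uniform lower bound \(1/(2+l)^{2+l}\), I would show that \(f(m) := (m/(m+l+1))^{m+l+1}\) is non-decreasing in \(m \geq 1\). Differentiating \(\log f\) yields \(\log(m/(m+l+1)) + (l+1)/m\), which the elementary inequality \(-\log(1-x) \leq x/(1-x)\) (with \(x = (l+1)/(m+l+1)\)) proves to be non-negative; hence the minimum is at \(m = 1\), giving \(f(1) = 1/(l+2)^{l+2}\). Substituting \(m = \lfloor\mu\rfloor\) in part~(a) and \(m = n - \lfloor\mu\rfloor\) in part~(b) yields the claimed uniform constant.

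The only genuinely delicate step is the boundary \(\lfloor\mu\rfloor + l + 1 > n\), which under the constraints of part~(a) can only occur when \(\mu = n - l\) is an integer; in that case the stochastic-dominance step gives the useless bound \(\prob(X_{n,n-l} \geq n+1) = 0\), and one must instead use the direct computation \(\prob(X_{n,n-l} \geq n) = ((n-l)/n)^n\) and verify elementarily that \(((n-l)/n)^n \geq ((n-l)/(n+1))^{n+1}\) (equivalently, that \(f\) remains monotone at \(m = n - l\)). Beyond this one case, the argument is pure book-keeping with floors, ceilings, and the translation between upper- and lower-tail events.
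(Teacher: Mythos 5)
Your argument for part~(a) is essentially the paper's own: rewrite the event with a ceiling, use \(\lceil\mu+l\rceil \leq \lfloor\mu\rfloor+l+1\), pass to \(X_{n,\lfloor\mu\rfloor}\) by stochastic dominance, and apply Theorem~\ref{thm:binomial1} with the shift bumped to \(l+1\). Note that the uniform constant \(1/(2+l)^{2+l}\) is already contained in the second chain of inequalities of Theorem~\ref{thm:binomial1} (it is the bound at \(\mu=1\) with shift \(l+1\)), so your separate calculus argument for the monotonicity of \(m\mapsto(m/(m+l+1))^{m+l+1}\) is not needed there, though it is harmless. One of your observations goes beyond what the paper writes down: the endpoint \(\mu=n-l\) is a genuine lacuna in the paper's proof, since there \(\lfloor\mu\rfloor+l+1=n+1\), the displayed chain degenerates to the useless bound \(\prob(X_{n,n-l}\geq n+1)=0\), and Theorem~\ref{thm:binomial1} is not applicable with shift \(l+1\). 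Your patch via \(\prob(X_{n,n-l}\geq n)=((n-l)/n)^{n}\) together with the monotonicity of \(k\mapsto(m/k)^{k}\) for \(k\geq m\) is correct and is actually required to make the proof complete.

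For part~(b), be aware that your reduction via \(n-X_{n,\mu}\sim X_{n,n-\mu}\) followed by part~(a) with mean \(n-\mu\) produces the bound with \(\lfloor n-\mu\rfloor=n-\lceil\mu\rceil\) where the corollary prints \(n-\lfloor\mu\rfloor\). For non-integer \(\mu\) these differ, so you have not proven part~(b) as literally stated --- nor should you try: the printed form is false. Take \(n=2\), \(l=0\), \(\mu=0.99\); then \(\prob(X_{2,\mu}\leq\mu)=\prob(X_{2,\mu}=0)=(0.505)^{2}\approx 0.255\), while the printed intermediate bound is \((2/3)^{3}\approx 0.296\). The correct statement has \(n-\lceil\mu\rceil\) in place of \(n-\lfloor\mu\rfloor\) (the final constant \(1/(2+l)^{2+l}\) survives, since \(\mu\leq n-1\) gives \(n-\lceil\mu\rceil\geq 1\)), and that corrected version is exactly what your symmetry argument delivers. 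So your proof is sound and in fact repairs two defects of the paper's; just flag explicitly that what you establish in part~(b) is the ceiling variant rather than the statement as printed.
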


Taking \(l = 0\) in Part~\ref{cor:binom2part1} gives the result of
\citeauthor{Greenberg2014}\cite{Greenberg2014} while \(l = 1\) corresponds to
the result included in \citeauthor{Doerr2018}\cite{Doerr2018}.

Note that the assumption \(\mu \geq 1\) is easily weakened since
\(\prob(X_{n,\mu} \geq \mu+l)\) is monotone increasing on \(\mu \in [0, 1)\).
For example, in the case \(l=0\) one finds that it suffices that \(\mu \geq
\log(3/4)\) for \(\prob(X_{n,\mu} \geq \mu) \geq 1/4\) to hold.

\begin{figure}[h]
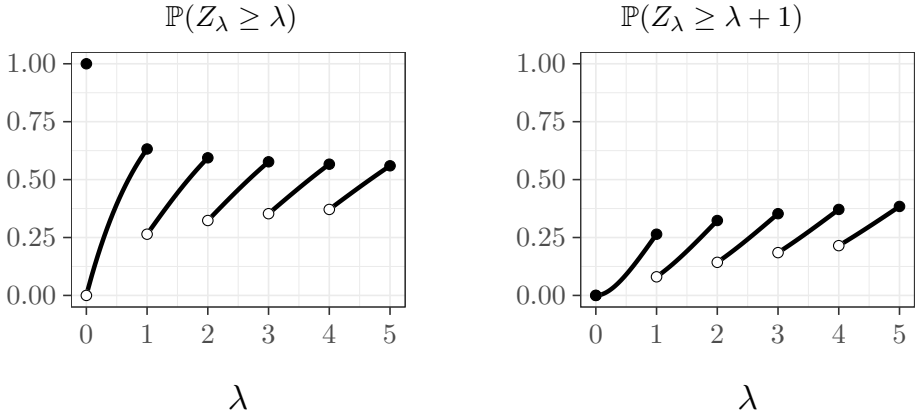

  \centering
  \begin{minipage}{0.46\textwidth}
    \centering{}
    \(\prob(Z_{\lambda} \geq \lambda)\)
    \mbox{\hspace{-1.25cm}\input{pictures/poisplot1.tex}}
  \end{minipage}
  \begin{minipage}{0.46\textwidth}
    \centering{}
    \(\prob(Z_{\lambda} \geq \lambda+1)\)
    \mbox{\hspace{-0.5cm}\input{pictures/poisplot2.tex}}
  \end{minipage}
  \caption{Probability of Poisson random variable exceeding (left) or
    exceeding by at least 1 (right) its expected value \(\lambda\)}
  \label{fig:fig2}
\end{figure}

Essentially the same method as Theorem~\ref{thm:binomial1} and
Corollary~\ref{cor:binom2} yields an analogous bound for the Poisson
distribution.

\begin{theorem} \label{thm:poisson1} For \(Z_{\lambda} \sim \ppois(\lambda)\)
  where \(\lambda \geq 1\) and \(l=0,1,\dotsc\) one has
  \begin{equation*}
    1/2 \geq \prob(Z_{\lambda} \geq \lambda + l) \geq \prob(Z_{1} \geq 1 + l) = 1-\sum_{k=0}^{l}\frac{1}{e k!}.
  \end{equation*}
\end{theorem}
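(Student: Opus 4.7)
The proof follows the template of Theorem~\ref{thm:binomial1}, exploiting that \(Z_\lambda\) is the distributional limit of \(X_{n,\lambda} \sim \pbinom(n,\lambda/n)\) as \(n \to \infty\). For the upper bound, the inequality \(\prob(X_{n,\lambda} \geq \lambda+l) \leq 1/2\) from Theorem~\ref{thm:binomial1} passes directly to the limit, giving
\[
\prob(Z_\lambda \geq \lambda+l) \;=\; \lim_{n \to \infty} \prob(X_{n,\lambda} \geq \lambda+l) \;\leq\; 1/2.
\]

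For the lower bound I would establish the sharper fact that \(\lambda \mapsto \prob(Z_\lambda \geq \lambda+l)\) is non-decreasing on integers \(\lambda \geq 1\); the conclusion then follows from the trivial base case \(\lambda = 1\). Using the Poisson additivity \(Z_{\lambda+1} \stackrel{d}{=} Z_\lambda + B\) with \(B \sim \ppois(1)\) independent, and conditioning on \(Z_\lambda\), the inductive step reduces, after cancelling common mass on \(\{Z_\lambda \geq \lambda+l+1\}\), to
\[
\sum_{j < \lambda+l} \prob(Z_\lambda = j)\,\prob(B \geq \lambda+l+1-j) \;\geq\; e^{-1}\prob(Z_\lambda = \lambda+l),
\]
which I would verify using the Poisson pmf ratios \(\prob(Z_\lambda = j)/\prob(Z_\lambda = j+1) = (j+1)/\lambda\) together with the explicit tail masses of \(\ppois(1)\). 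A conceptually cleaner route is to write \(\prob(Z_\lambda \geq \lambda+l) = \expect[g(W)]\) with \(W \sim \ppois(\lambda-1)\) independent of \(Z_1\) and \(g(w) = \prob(Z_1 \geq \lambda+l-w)\), and invoke a Jensen-type inequality using that \(g\) has non-negative second differences on \(\{0,\ldots,\lambda+l-1\}\) (since the pmf of \(Z_1\) is non-increasing past its mode), so that \(\expect[g(W)] \geq g(\expect W) = g(\lambda-1) = \prob(Z_1 \geq 1+l)\).

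Non-integer \(\lambda\) is dealt with exactly as in the remark preceding Theorem~\ref{thm:binomial1}: jump discontinuities occur only at integer values of \(\lambda+l\), so the integer-parameter bound extends. The main obstacle I anticipate is the monotonicity step itself: the one-term estimate above is sharp enough for small \(\lambda\) but not uniformly, so a careful summation across several indices is needed; in the Jensen formulation the corresponding obstacle is the saturation \(g(w) = 1\) on \(\{w \geq \lambda+l\}\), where convexity breaks down and so must be handled by splitting the expectation on this event (on which \(Z_\lambda \geq Z_1 + W \geq \lambda+l\) holds automatically) from its convex complement.
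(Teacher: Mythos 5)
Your high-level strategy --- reduce the lower bound to monotonicity of \(\lambda \mapsto \prob(Z_{\lambda} \geq \lambda+l)\) over integers and iterate down to \(\lambda = 1\) --- is exactly the paper's, but you replace the paper's mechanism for the monotonicity step with a direct computation that is not carried out, and this is where the gap lies. The paper gets \(\prob(Z_{\lambda} \geq \lambda+l) \geq \prob(Z_{\lambda-1} \geq \lambda-1+l)\) for free by letting \(n \to \infty\) in Equation~\ref{eq:hoeffding1}, itself an instance of Hoeffding's Theorem~\ref{thm:hoeffding} applied to the vector \((1,(\mu-1)/(n-1),\dotsc)\). Your reduction of the inductive step to
\[
\sum_{j<\lambda+l}\prob(Z_{\lambda}=j)\,\prob(B\geq\lambda+l+1-j)\;\geq\;e^{-1}\prob(Z_{\lambda}=\lambda+l)
\]
is algebraically correct, but the inequality is left unverified and is not routine: writing the left side as \(\prob(Z_{\lambda}=\lambda+l)\sum_{i\geq1}c_{i}\,\prob(B\geq i+1)\) with \(c_{i}=\prod_{j=0}^{i-1}(\lambda+l-j)/\lambda\), one has \(c_{i}\to1\) for each fixed \(i\) as \(\lambda\to\infty\), while \(\sum_{i\geq1}\prob(B\geq i+1)=\expect((B-1)^{+})=e^{-1}\) exactly; so the inequality is asymptotically an equality and crude term-by-term estimates cannot close it --- one must show the surplus from the indices with \(c_{i}>1\) dominates the deficit from those with \(c_{i}<1\). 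The Jensen variant has the same unresolved core: the support line of \(g\) at \(\expect W=\lambda-1\) overshoots \(g\) on the saturation set \(\{w\geq\lambda+l\}\), and showing that the mass placed there compensates for the downward shift of the conditional mean on the complement is precisely the nontrivial content, not a formality that splitting the expectation dispatches.

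More seriously, the monotonicity you propose is false for \(l=0\), a case your proposal explicitly includes: \(\prob(Z_{2}\geq2)=1-3e^{-2}\approx0.594<1-e^{-1}=\prob(Z_{1}\geq1)\), and correspondingly your reduced inequality fails at \(\lambda=1\), \(l=0\), where its left side is \(e^{-1}(1-2e^{-1})\approx0.097\) against \(e^{-2}\approx0.135\). The direction of Corollary~\ref{cor:hoeffding} reverses when \(a\leq\mu\), which is why the paper's own proof quietly assumes \(l\geq1\); for \(l=0\) the probability \(\prob(Z_{\lambda}\geq\lambda)\) \emph{decreases} towards \(1/2\) (and the theorem's displayed chain is itself inconsistent there, since \(1/2<1-e^{-1}\)). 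A correct writeup must treat \(l=0\) separately or exclude it; your argument as proposed cannot establish that case, and for \(l\geq1\) the decisive inequality remains to be proved.
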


For example, taking \(l = 0\), one finds \(\prob(Z_{\lambda} \geq \lambda) \geq
1 - e^{-1}\).

We will prove these results using the fact that, roughly speaking, were it not
for the discreteness of the distribution it would be the case that
\(\prob(X_{n,\mu} > \mu)\) was increasing in \(n\) when \(p = \mu/n\) was held
fixed. The actual inequalities used are illustrated in
Figure~\ref{fig:binomiallattice}.

\begin{figure}[h]
  \centering
  \begin{tikzpicture}
    \node (highk) {\(\prob(X_{n,\mu} > \mu)\)};
    \node[below left= 1.8cm and 1.8cm of highk.center, anchor=center] (lowkminus) {\(\prob(X_{n\shortminus{}1,\mu\shortminus{}1} > \mu\!-\!1)\)};
    \node[below right= 1.8cm and 1.8cm of highk.center, anchor=center] (lowk) {\(\prob(X_{n\shortminus{}1,\mu} > \mu)\)};
    \node[above right= 1.8cm and 1.8cm of lowk.center, anchor=center] (highkplus) {\(\prob(X_{n,\mu+1} > \mu\!+\!1)\)};
    \node[above left= 1.8cm and 1.8cm of lowkminus.center, anchor=center] (highkminus) {\(\prob(X_{n,\mu\shortminus{}1} > \mu\!-\!1)\)};
    \path (highkminus) -- node[sloped] {\(\geq\)} (lowkminus) -- node[sloped] {\(\leq\)} (highk) -- node[sloped] {\(\geq\)} (lowk) -- node[sloped] {\(\leq\)} (highkplus);
    \node[above= 1cm of highkminus.center, anchor=center] (above1) {\(\frac{\mu-1}{n}\)};
    \node[above= 2.8cm of lowkminus.center, anchor=center] (above2) {\(\frac{\mu-1}{n-1}\)};
    \node[above= 1cm of highk.center, anchor=center] (above3) {\(\frac{\mu}{n}\)};
    \node[above= 2.8cm of lowk.center, anchor=center] (above4) {\(\frac{\mu}{n-1}\)};
    \node[above= 1cm of highkplus.center, anchor=center] (above5) {\(\frac{\mu+1}{n}\)};
    \path (above1) -- node {\(\leq\)} (above2) -- node {\(\leq\)} (above3) -- node {\(\leq\)} (above4) -- node {\(\leq\)} (above5);
    \node[below left= 1.8cm and 1.8cm of highkminus.center,anchor=center] (dummy1) {\(\mathrlap{\ldots}\)};
    \node[below right= 1.8cm and 1.8cm of highkplus.center,anchor=center] (dummy2) {\(\mathllap{\ldots}\)};
    \node[above right=2.8cm and 1cm of dummy1.center, anchor=center] {\(p = \)};
    \path (dummy1) -- node[sloped] {\(\leq\)} (highkminus);
    \path (dummy2) -- node[sloped] {\(\geq\)} (highkplus);
  \end{tikzpicture}
  \caption{Illustration of how the central inequalities (found in
    Corollary~\ref{cor:hoeffding}) for proving Theorem~\ref{thm:binomial1} say,
    roughly, that the probability of exceeding the expectation is monotone in
    \(n\) when \(p = \mu/n\) is fixed.}
  \label{fig:binomiallattice}
\end{figure}

Indeed, with an appropriate interpolation the monotonicity can be made to hold
exactly, at least for certain ranges of \(\mu\) and \(l\).

Order statistics of the uniform distribution on the unit
interval are beta distributed, so that for any integer \(k\) one has
\(\prob(X_{n,\mu} \geq k) = \prob(W_{k,n-k+1} \leq \mu/n)\) where \(W_{a,b} \sim
\pbeta(a, b)\). That is to say \(W_{a,b}\) has a (Lebesgue) absolutely
continuous distribution supported on the unit interval with density \(f_{a,b}(x)
= x^{a-1}(1-x)^{b-1}/B(a, b)\) for \(B(a, b) = \int_{0}^{1}x^{a-1}(1-x)^{b-1} =
\Gamma(a)\Gamma(b)/\Gamma(a+b)\) the Beta function. Combining this fact with the
fact that \(\pbeta(a, b)\) is stochastically increasing in \(a\) and decreasing
in \(b\) yields the following.

\begin{proposition} \label{prop:binombetabound}
  For \(X_{n,\mu} \sim \pbinom(n, \mu/n)\), \(W_{a,b} \sim \pbeta(a, b)\) and
  any \(n \geq 1\), \(\mu \in [0, n]\) and \(0 < \mu + l < n\), the following inequalities hold
  \begin{equation*}
    \prob(W_{\mu+l,n-\mu-l+1} \geq \mu/n)
    \leq
    \prob(X_{n,\mu} \geq \mu + l)
    <
    \prob(W_{\mu+l+1,n-\mu-l} \geq \mu/n),
  \end{equation*}
  with the lower bound being an equality whenever \(\mu\) is an integer, and the upper
  bound being tight as \(\mu\) approaches an integer from below.
\end{proposition}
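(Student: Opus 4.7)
The proof combines two ingredients already assembled in the paragraphs preceding the proposition. The first is the classical binomial-beta identity: for each integer $k \in \{1, \dotsc, n\}$ one has $\prob(X_{n,\mu} \geq k) = \prob(W_{k, n-k+1} \leq \mu/n)$, obtained by identifying $W_{k,n-k+1}$ with the $k$-th order statistic of $n$ i.i.d.\ uniform variables on $[0,1]$. The second is the stochastic monotonicity of $\pbeta(a,b)$ in its parameters, increasing in $a$ and decreasing in $b$, explicitly recalled just before the statement.

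My plan is first to reduce the potentially non-integer hurdle $\mu+l$ to an integer one. Since $X_{n,\mu}$ is integer supported, $\prob(X_{n,\mu} \geq \mu+l) = \prob(X_{n,\mu} \geq k)$ with $k = \lceil \mu+l \rceil$, and the identity rewrites this as a Beta tail at $\mu/n$ with shape parameters $(k, n-k+1)$ (equivalently, $1$ minus the complementary tail). The key observation is then that
\[
  \mu + l \leq k \leq \mu + l + 1,
  \qquad
  n - \mu - l \leq n - k + 1 \leq n - \mu - l + 1,
\]
so stochastic monotonicity sandwiches $W_{k, n-k+1}$ in the usual stochastic order between $W_{\mu+l, n-\mu-l+1}$ (smaller first index, larger second, hence stochastically smaller) and $W_{\mu+l+1, n-\mu-l}$ (stochastically larger). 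Evaluating this sandwich on the appropriate tail event at $\mu/n$ yields the two-sided inequality of the proposition.

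The two tightness assertions drop out immediately. When $\mu$ is an integer, $\mu+l$ is too, so $k = \mu+l$ exactly; the lower side of the stochastic sandwich is then an equality, producing the claimed equality in the Beta bound. As $\mu$ approaches an integer from below (so $\mu+l$ does too), $k$ equals the next integer while $\mu + l + 1 \to k$, collapsing the upper side of the sandwich to an equality in the limit and giving the stated tightness.

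The only point that requires care is that the order-statistic description gives the stochastic monotonicity for integer shape parameters, while the argument above invokes it for all real $a, b > 0$. This is a classical fact (derivable, e.g., by differentiating the regularised incomplete beta function in its parameters, or via the likelihood ratio order), covered by the general reference \cite{Shaked2007} already cited. With that granted, the proof is essentially one line: apply the identity at $k = \lceil \mu+l \rceil$ and then the monotonicity to move from $k$ to the two adjacent continuous parameter pairs.
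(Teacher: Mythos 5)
Your overall strategy is exactly the one the paper intends: Proposition~\ref{prop:binombetabound} is presented as an immediate consequence of the two facts recalled in the paragraph preceding it (the order-statistic identity \(\prob(X_{n,\mu}\geq k)=\prob(W_{k,n-k+1}\leq\mu/n)\) for integer \(k\), and the stochastic monotonicity of \(\pbeta(a,b)\) in its two parameters), and no separate proof appears in the Proofs section. Your reduction to \(k=\lceil\mu+l\rceil\), the stochastic sandwich \(W_{\mu+l,\,n-\mu-l+1}\preceq W_{k,\,n-k+1}\preceq W_{\mu+l+1,\,n-\mu-l}\), and the caveat that the monotonicity must be invoked for non-integer shape parameters are all correct and on target.

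The gap is in the sentence ``evaluating this sandwich on the appropriate tail event at \(\mu/n\) yields the two-sided inequality of the proposition'': no choice of tail event does. The identity expresses the middle quantity as a \emph{lower}-tail probability, so evaluating the sandwich on \(\{\,\cdot\leq\mu/n\}\) gives
\begin{equation*}
\prob(W_{\mu+l+1,\,n-\mu-l}\leq\mu/n)\;\leq\;\prob(X_{n,\mu}\geq\mu+l)\;\leq\;\prob(W_{\mu+l,\,n-\mu-l+1}\leq\mu/n),
\end{equation*}
with \(W_{\mu+l,\,n-\mu-l+1}\) furnishing the \emph{upper} bound and the equality at integer \(\mu\) sitting on the right; evaluating it on \(\{\,\cdot\geq\mu/n\}\) reproduces the left-to-right ordering of the displayed chain but replaces the middle term by \(\prob(W_{k,n-k+1}\geq\mu/n)=\prob(X_{n,\mu}<\mu+l)\). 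The chain as displayed in the statement fails numerically (\(n=3\), \(\mu=l=1\): \(\prob(W_{2,2}\geq 1/3)=20/27\) while \(\prob(X_{3,1}\geq 2)=7/27\)), so you cannot simply assert that ``the appropriate event'' produces it --- you must either prove the corrected chain above (which your argument does, and which is what is consistent with the identity quoted before the proposition and with the use of \(\prob(W_{n}\leq p)\) in Theorem~\ref{thm:betamono}) or flag the discrepancy with the stated display. The tightness claims need the same care rather than ``dropping out immediately'': with the corrected orientation the bound involving \(W_{\mu+l+1,\,n-\mu-l}\) collapses onto \(W_{k,n-k+1}\) precisely when \(\lceil\mu+l\rceil-(\mu+l)\to 1\), i.e.\ as \(\mu\) decreases to an integer, not as it increases to one.
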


One may thus think of \(\prob(W_{\mu+l,n-\mu-l+1} \geq \mu/n)\) and
\(\prob(W_{\mu+l+1,n-\mu-l} \geq \mu/n)\) as lower and upper smooth
interpolations of \(\prob(X_{n,\mu} \geq \mu + l)\). Having gotten rid of the
discontinuities we study instead these smoothly varying quantities. They tend, at
least under certain conditions, \emph{monotonically} to \(1/2\) as \(n\) tends
to infinity.

\begin{theorem} \label{thm:betamono} Let \(p \in (0, 1)\), \(a, b \in \R\). For
  \(n \in \R\) such that \(-a/p, -b/(1-p) < n\) let \(W_{n} \sim \pbeta(pn+a,
  (1-p)n+b)\).

   If one of the following two conditions hold:
   \begin{itemize}
   \item \(p \leq 1/2\) and either \(a \geq 1 \geq b\) or
     \((a+b-2)((a-1)/(a+b-2) - p) > 0\),
   \item \(p \geq 1/2\) and either \(b \geq 1 \geq a\) or
     \((a+b-2)((a-1)/(a+b-2) - p) < 0\),
   \end{itemize}
   then \(\prob(W_{n} \leq p)\) tends monotonically to \(1/2\) as \(n\) tends to
   infinity.
\end{theorem}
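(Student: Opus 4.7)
The plan is to extend $F(n) := \prob(W_n \leq p)$ to a smooth function of real $n$, show $F(n) \to 1/2$ by standard concentration, and prove that $F$ is monotone by verifying that $F'(n)$ has a consistent sign under the stated hypotheses. Convergence is routine: $\expect[W_n] = (pn+a)/(n+a+b) \to p$ and $\mathrm{Var}(W_n) = \alpha_n\beta_n/[(\alpha_n+\beta_n)^2(\alpha_n+\beta_n+1)] = O(1/n)$, so using the Gamma representation $W_n = U_n/(U_n+V_n)$ with $U_n \sim \pgamma(\alpha_n, 1)$, $V_n \sim \pgamma(\beta_n, 1)$ independent and the central limit theorem applied to $(1-p)U_n - pV_n$, one obtains $F(n) \to 1/2$.

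For monotonicity, differentiating under the integral sign gives
\[
F'(n) = \int_0^p f_n(x)\bigl[\phi(x) - K_n\bigr]\,dx = \mathrm{Cov}\bigl(\phi(W_n),\, \mathbf{1}(W_n \leq p)\bigr),
\]
where $\phi(x) := p\log x + (1-p)\log(1-x)$ is strictly concave and maximised at $x = p$, and $K_n := \expect[\phi(W_n)] = p\psi(\alpha_n) + (1-p)\psi(\beta_n) - \psi(\alpha_n+\beta_n)$ with $\psi$ the digamma function. The essential new ingredient is the decreasing involution $r : [0,p] \to [p,1]$ characterised by $\phi(r(x)) = \phi(x)$ and $r(p) = p$: at reflected pairs the $n$-dependent factor $[x^p(1-x)^{1-p}/(r(x)^p(1-r(x))^{1-p})]^n$ is identically $1$, yielding the striking $n$-independent identity
\[
\frac{f_n(x)}{f_n(r(x))} = \left(\frac{x}{r(x)}\right)^{a-1}\left(\frac{1-x}{1-r(x)}\right)^{b-1}.
\]

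Changing variables to $u = \phi(x) = \phi(r(x))$ pushes the restrictions of $f_n$ to $[0,p]$ and $[p,1]$ forward to densities $\tilde f_L, \tilde f_R$ on $(-\infty, \phi(p)]$ and allows rewriting
\[
F'(n) = \frac{1}{2}\int_{-\infty}^{\phi(p)} (u - K_n)\bigl[\tilde f_L(u) - \tilde f_R(u)\bigr]\,du.
\]
Under the hypotheses for $p \leq 1/2$, the quantity $(a-1)(1-p) - p(b-1)$ is nonnegative (this is precisely the mode condition rewritten), so $f_n(x)/f_n(r(x))$ is a monotone function of $x \in [0,p]$; combined with the Jacobian factors $1/|\phi'(x(u))|$ and $1/|\phi'(r(x(u)))|$ this should produce a monotone-likelihood-ratio comparison between $\tilde f_L$ and $\tilde f_R$, equivalent to stochastic dominance of $\phi(W_n) \mid W_n \leq p$ over $\phi(W_n) \mid W_n > p$. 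Consequently $\expect[\phi(W_n) \mid W_n \leq p] \geq \expect[\phi(W_n) \mid W_n > p]$, so $F'(n) \geq 0$; combined with the convergence $F(n) \to 1/2$ this gives monotone convergence from below. The case $p \geq 1/2$ follows by the symmetry $W_n \leftrightarrow 1-W_n$, which swaps $(p,a,b)$ with $(1-p,b,a)$ and reverses the direction of monotonicity. The main obstacle will be verifying the MLR property rigorously after the $u$-substitution, since the Jacobian contribution and the density-ratio exponent must cooperate under the precise form of the hypotheses; the degenerate case $a+b = 2$ (where the mode formula breaks down) is exactly where the alternative assumption $a \geq 1 \geq b$ is needed.
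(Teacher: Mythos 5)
Your top-level strategy is viable and genuinely different from the paper's. The derivative identity is correct: with $f_n(x)=c_nx^{pn+a-1}(1-x)^{(1-p)n+b-1}$ one gets $F'(n)=\mathrm{Cov}\bigl(\phi(W_n),\mathbf{1}(W_n\leq p)\bigr)=\prob(W_n\leq p)\prob(W_n>p)\bigl(\expect[\phi(W_n)\mid W_n\leq p]-\expect[\phi(W_n)\mid W_n>p]\bigr)$, and your condition $(1-p)(a-1)-p(b-1)\geq 0$ is exactly the paper's $(a+b-2)((a-1)/(a+b-2)-p)>0$ rewritten, with $a\geq 1\geq b$ covering the degenerate case $a+b=2$. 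The paper never differentiates in $n$: it compares two parameter values $n<m$ directly, via a conditional stochastic dominance lemma (increasing both Beta parameters moves mass toward $p$ on each side) combined with an identity expressing the odds $\prob(W\leq p)/\prob(W>p)$ as $\expect\bigl(-r'(W)g(r(W))/g(W)\mid W>p\bigr)$. You trade that two-parameter comparison for a rearrangement inequality at a single $n$; both reductions are legitimate and, as it turns out, both bottleneck at the same place.

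That bottleneck is the genuine gap, and it is the step you flag but do not carry out. Unwinding your $u$-substitution, $\tilde f_L(u)/\tilde f_R(u)=\bigl(-r'(x)\,g(r(x))/g(x)\bigr)^{-1}$ with $g(x)=x^{a-1}(1-x)^{b-1}$ and $u=\phi(x)$ increasing on $[0,p]$, so the claimed MLR is precisely the statement that $h(x)=-r'(x)\,g(r(x))/g(x)$ is non-increasing on $[0,p]$ --- the same function whose monotonicity the paper spends Lemmas~\ref{lemma:rprops}--\ref{lemma:hmono} establishing. This splits into two monotonicities that must agree in direction. The factor $g(r(x))/g(x)$ is controlled by your exponent condition and follows from a short logarithmic-derivative computation using the first-order relation $(p-r(x))x(1-x)r'(x)=(p-x)r(x)(1-r(x))$. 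The factor $-r'(x)$, however, is non-increasing precisely when $r$ is convex, and proving convexity of $r$ for $p\leq 1/2$ is where the real work lies: the paper derives a second-order differential equation for $r$ whose sign analysis reduces to the two-sided bound $2p-x\leq r(x)\leq 1-x$, and the lower bound is itself a nontrivial elementary inequality (equivalent, after a substitution, to $(1/s-s)/2\geq-\log s$ on $(0,1)$). Note also that the hypothesis $p\leq 1/2$ enters only through this Jacobian factor, while your algebraic condition controls only the density-ratio factor, so neither ingredient alone ``cooperates'' into the MLR --- both monotonicities must be proved separately and shown to point the same way. As written, the crux of the theorem is asserted rather than proved.
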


We conjecture that the statement is true even without the conditions on \(p\),
\(a\) and \(b\) holding. In case \(a = b = 1\) the theorem applies to all \(p\)
and \(p\) is equal to the mode of the distribution. Taking, for example, the
case \(n = 0\) and the case of \(n\) tending to \(\infty\) shows the probability
of of being less than or greater than some fixed mode \(p\) lies in the interval containing
\(p\) and \(1/2\) when both both parameters are greater than \(1\).

Combining Proposition~\ref{prop:binombetabound}~and~Theorem~\ref{thm:betamono}
one can produce variants of Proposition~\ref{prop:binombetabound} uniform in
\(n\), such as the following.
\begin{corollary} \label{cor:ineqlat}
  For any \(n \geq 1\), \(\mu \in [0, n]\), and \(a, b > 0\) let \(X_{n,\mu}
  \sim \pbinom(n, \mu/n)\) and \(W_{a,b} \sim \pbeta(a, b)\). Then if \(\mu/n
  \leq 1/2\), \(l = 1, 2, \dotsc\), and \(\mu + l < n\) it holds that
  \begin{equation*}
    \prob(X_{n,\mu} \geq \mu + l)
    \geq \prob(W_{\mu/n+l,(1-\mu/n)-l+1} \geq \mu/n).
  \end{equation*}
\end{corollary}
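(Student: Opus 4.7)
The plan is to chain Proposition~\ref{prop:binombetabound} with Theorem~\ref{thm:betamono}: the proposition converts the binomial tail into a beta probability at parameters \((\mu+l,\ n-\mu-l+1)\), and the theorem is then used to replace this with a beta whose parameters do not depend on \(n\).

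First, the lower half of Proposition~\ref{prop:binombetabound} bounds \(\prob(X_{n,\mu}\ge\mu+l)\) below by the corresponding beta probability at parameters \((\mu+l,\,n-\mu-l+1)\). Writing \(p=\mu/n\), these parameters rearrange as \((pn+l,\,(1-p)n+(1-l))\), placing the beta in the family of Theorem~\ref{thm:betamono} with \(a=l\), \(b=1-l\) and with \(n\) playing the role of the continuous sliding parameter.

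Next, I check the hypotheses of Theorem~\ref{thm:betamono}: \(\mu/n\le 1/2\) gives \(p\le 1/2\), and \(l\ge 1\) gives \(a=l\ge 1\ge 1-l=b\), so the first sub-condition for \(p\le 1/2\) is met. The theorem then asserts that the relevant beta probability is monotone in the sliding parameter, with limit \(1/2\). Specialising at the ``base'' value \(1\) of the sliding parameter yields exactly the beta \(\pbeta(p+l,\,2-p-l)=\pbeta(\mu/n+l,\,(1-\mu/n)-l+1)\) appearing in the statement, so monotonicity from this base value up to the actual \(n\ge l+1\) delivers the claimed uniform-in-\(n\) bound.

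The main obstacle is pinning down the direction of monotonicity precisely: Theorem~\ref{thm:betamono} asserts monotonicity and convergence to \(1/2\) but not the sign. A short side computation of the mean \((pn+l)/(n+1)\) compared with \(p\) (it strictly exceeds \(p\) because \(l\ge 1>p\)) resolves this by pinning down which side of \(1/2\) the beta probability sits on. A minor bookkeeping point is that the target beta \(\pbeta(p+l,\,2-p-l)\) has a positive second parameter only when \(l<2-p\); in the regime of interest one therefore reads the statement with this implicit restriction, and for larger \(l\) one would replace the base value \(1\) by the smallest integer for which the beta parameters remain positive and re-apply monotonicity from that point onward.
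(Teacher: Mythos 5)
You follow the intended route---chain Proposition~\ref{prop:binombetabound} with Theorem~\ref{thm:betamono} applied to \(a = l\), \(b = 1-l\)---and you correctly verify the hypothesis \(a \geq 1 \geq b\) for \(p \leq 1/2\) and correctly flag that the target beta has positive parameters only when \(l = 1\). The genuine gap is precisely the step you defer to a ``short side computation'': the direction of the monotonicity is where the entire content of the corollary lies, and your heuristic for it points the wrong way for the inequality as you have set it up. With \(a = l \geq 1\), \(b = 1-l \leq 0\) and \(p \leq 1/2\), the function \(h(x) = -r'(x)g(r(x))/g(x)\) in the proof of Theorem~\ref{thm:betamono} is non-increasing (Lemmas~\ref{lemma:rmono} and~\ref{lemma:hmono}), so \(\prob(W_N \leq p)\) is non-\emph{decreasing} in \(N\) and approaches \(1/2\) from below; equivalently \(\prob(W_N \geq p)\) \emph{decreases} to \(1/2\). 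Your own observation that the mean \((pN+l)/(N+1)\) exceeds \(p\) confirms this: the upper-tail probability sits above \(1/2\) and must therefore be decreasing. But then \(\prob(W_n \geq p) \leq \prob(W_1 \geq p)\), which is the opposite of what you need in order to lower-bound the \(N=n\) beta probability by the \(N=1\) one. A numerical check makes the failure concrete: for \(n=4\), \(\mu=1\), \(l=1\) one has \(\prob(X_{4,1}\geq 2) = 67/256 \approx 0.26\) while \(\prob(W_{5/4,\,3/4} \geq 1/4) \approx 0.87\).

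The resolution is that the beta events throughout should be \(\{W \leq \mu/n\}\), as in the order-statistics identity \(\prob(X_{n,\mu} \geq k) = \prob(W_{k,n-k+1} \leq \mu/n)\) stated just before Proposition~\ref{prop:binombetabound}; the ``\(\geq \mu/n\)'' in the displayed proposition and corollary is a slip (the proposition's claim of equality at integer \(\mu\) is only possible with \(\leq\)). With that reading your chain closes: at integer \(\mu\) the proposition gives \(\prob(X_{n,\mu} \geq \mu+l) = \prob(W_{pn+l,\,(1-p)n+1-l} \leq p)\), and the non-decreasing monotonicity in \(N\) from Theorem~\ref{thm:betamono} bounds this below by its value at \(N=1\), namely \(\prob(W_{p+l,\,2-p-l} \leq p)\). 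So your architecture is the right one, but as written the proof asserts a monotonicity direction that is false for the statement as printed; the direction question, together with the orientation of the beta tail, cannot be left as a side remark---it is the whole proof.
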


\section{Proofs}

The workhorse in the proofs of
Theorem~\ref{thm:binomial1}~and~\ref{thm:poisson1} is the following old result
due to Hoeffding.

\begin{theorem}[Theorem~5 of
  \citeauthor{Hoeffding1956}\cite{Hoeffding1956}] \label{thm:hoeffding} Fix some
  \(n = 1,2,\dotsc\) let \(Y_{1}, \dotsc, Y_{n}\) be independent indicators with
  \(\prob(Y_{i} = 1) = p_{i}\) and \(\sum_{i=1}^{n}p_{i} = \mu\). For any \(0
  \leq a \leq \mu-1 < \mu \leq b \leq n\) it holds that
  \begin{equation} \label{eq:hoeffding}
    \prob(a \leq \pbinom(n,\mu/n) \leq b)
    \leq
    \prob(a \leq \sum_{i=1}^{n}Y_{i} \leq b).
  \end{equation}
\end{theorem}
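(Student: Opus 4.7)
The approach is to follow Hoeffding's variational argument. Let $f(p_1, \ldots, p_n) = \prob(a \leq \sum_i Y_i \leq b)$ be defined on the simplex $\Delta = \{p \in [0,1]^n : \sum p_i = \mu\}$. The statement is equivalent to showing that $f$ attains its minimum at the symmetric point $p^\star = (\mu/n, \ldots, \mu/n)$, which corresponds to the distribution $\pbinom(n, \mu/n)$.

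First I would compute the gradient by conditioning on $Y_i$. With $S_{-i} = \sum_{k \neq i} Y_k$, independent of $Y_i$, one has
\[
  f = (1-p_i)\prob(a \leq S_{-i} \leq b) + p_i\prob(a-1 \leq S_{-i} \leq b-1),
\]
so that $\partial_i f = \prob(S_{-i} = a-1) - \prob(S_{-i} = b)$ after telescoping. At any interior minimum the Lagrange condition $\partial_i f = \partial_j f$ must hold. Using the identity
\[
  \prob(S_{-i} = m) - \prob(S_{-j} = m) = (p_i - p_j)\bigl[\prob(S_{-i,-j} = m) - \prob(S_{-i,-j} = m-1)\bigr],
\]
with $S_{-i,-j} = \sum_{k \neq i,j} Y_k$, the condition rewrites as $(p_i - p_j)\bigl[h(a-1) - h(b)\bigr] = 0$, where $h(k) = \prob(S_{-i,-j} = k) - \prob(S_{-i,-j} = k-1)$.

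The remaining step is to show, using the hypothesis $a \leq \mu-1 < \mu \leq b$ and the log-concavity of the Poisson binomial PMF of $S_{-i,-j}$ (whose mean is $\mu - p_i - p_j$), that the only minimizer in the interior of $\Delta$ is $p^\star$. Boundary configurations, in which some $p_k \in \{0,1\}$, reduce by conditioning on the deterministic $Y_k$ to the same inequality with $n$ replaced by $n-1$ and $\mu$ shifted by $0$ or $1$; the interval hypothesis on $[a,b]$ is preserved by this reduction, so the argument closes by induction on $n$.

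The hard part will be establishing that $p^\star$ is the global minimum and not merely a Lagrange stationary point. The naive approach of pairwise smoothing, replacing $p_i, p_j$ by their common average $(p_i + p_j)/2$ and hoping that $f$ decreases monotonically, does not work: one can produce explicit examples where such a single step strictly \emph{increases} $f$, even though the global minimum lies at the fully symmetric point. Moreover the bracketed quantity $h(a-1) - h(b)$ is not generally of definite sign, so Lagrange stationarity alone does not immediately force $p_i = p_j$ for all pairs. The proof must therefore analyze the full system of Lagrange equations simultaneously and make delicate use of the placement of $[a,b]$ around $\mu$ to rule out non-symmetric minima, rather than relying on a simple pathwise majorization.
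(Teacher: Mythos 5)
First, note that the paper does not prove this statement at all: it is Theorem~5 of Hoeffding's 1956 paper, imported as a known result, so there is no internal proof to compare against. What matters is whether your argument would actually establish it, and as you yourself concede in the final paragraph, it does not. The derivative computation and the identity
\[
  \prob(S_{-i}=m)-\prob(S_{-j}=m)=(p_i-p_j)\bigl[\prob(S_{-i,-j}=m)-\prob(S_{-i,-j}=m-1)\bigr]
\]
are correct, but first-order stationarity plus a vague appeal to log-concavity cannot rule out non-symmetric global minimizers, and you explicitly leave that step open. A proposal whose "hard part" is announced rather than executed has a genuine gap at exactly the point where the theorem lives.

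The missing idea is that the Lagrange machinery is the wrong tool here; the standard (Hoeffding's) route exploits a structural fact that your identity above is already hinting at. Fix all coordinates except $p_i,p_j$ and fix $s=p_i+p_j$. The distribution of $Y_i+Y_j$ depends on $(p_i,p_j)$ only through $s$ and the product $t=p_ip_j$, and $\prob(a\le\sum_kY_k\le b)$ is an \emph{affine} function of $t$ as $t$ ranges over the interval $[\max(0,s-1),\,s^2/4]$. An affine function attains its minimum at an endpoint, so at any minimizer every pair $(p_i,p_j)$ either satisfies $p_i=p_j$ (the endpoint $t=s^2/4$) or has one of the two coordinates in $\{0,1\}$ (the other endpoint). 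Hence a minimizer has all $p_i\in\{0,\bar p,1\}$ for a single common value $\bar p$. This is precisely why your observation that pairwise averaging can \emph{increase} $f$ is not fatal: by linearity one can always move a pair to the opposite endpoint without increasing $f$, which is a valid pathwise reduction even when averaging fails. The second stage is then to use the hypothesis $a\le\mu-1<\mu\le b$ to show that configurations containing $0$'s or $1$'s are no better than the fully symmetric one; this is exactly the content of the two inequalities the paper extracts in its Corollary~\ref{cor:hoeffding} (comparing $\pbinom(n,\mu/n)$ with the laws obtained by prepending a deterministic $0$ or $1$), run in the opposite direction. Your induction-on-$n$ remark for boundary configurations is the right instinct for this second stage, but without the affine-in-$p_ip_j$ reduction you never legitimately arrive at the boundary-or-symmetric dichotomy in the first place.
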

It was later shown by \citeauthor{Gleser1975}\cite{Gleser1975} that this is a
special case of a more general majorization inequality, the right hand side of
Equation~\ref{eq:hoeffding} is really Shur-convex as a function of the vector
\((p_{1}, \dotsc, p_{n})\).

The following corollary
appears also in \citeauthor{Anderson1967}\cite{Anderson1967}, to compare
binomial and Poisson probabilities, and in
\citeauthor{Jogdeo1968}\cite{Jogdeo1968}, where it is used to analyse the median
of the binomial distribution. Since it is short and instructive, we include a
proof for completeness.

\begin{corollary} \label{cor:hoeffding}

  For integer \(a > \mu\) and \(X_{n, \mu} \sim \pbinom(n, \mu/n)\) one has
  \begin{align}
    \prob(X_{n,\mu} \geq a) &\geq \prob(X_{n-1,\mu-1} \geq a-1) \label{eq:hoeffding1}
    \intertext{and}
    \prob(X_{n,\mu} \geq a) &\geq \prob(X_{n-1,\mu} \geq a),\vspace{-0.1cm} \label{eq:hoeffding2}
  \end{align}
  with inequalities reversed if \(\mu \geq a\).
\end{corollary}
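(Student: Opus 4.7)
The plan is to apply Theorem~\ref{thm:hoeffding} with a careful choice of the independent Bernoullis $Y_1,\dotsc,Y_n$ so that their sum has the same distribution as a shifted binomial with one fewer trial. Concretely, I would use two different templates.

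For \eqref{eq:hoeffding1}, set $p_n = 1$ and $p_1 = \dotsb = p_{n-1} = (\mu-1)/(n-1)$, which is a valid assignment because $\mu \geq 1$; these probabilities sum to $\mu$, and $\sum_{i=1}^{n} Y_i$ is distributed as $1 + X_{n-1,\mu-1}$. For \eqref{eq:hoeffding2}, set $p_n = 0$ and $p_1 = \dotsb = p_{n-1} = \mu/(n-1)$ (valid as long as $\mu \leq n-1$, which is implicit in the regime of interest), so that $\sum_{i=1}^{n} Y_i \sim \pbinom(n-1,\mu/(n-1))$ is distributed as $X_{n-1,\mu}$.

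Next I would invoke Hoeffding's Theorem~\ref{thm:hoeffding} in the form of its complement. When $a > \mu$ is an integer, taking the interval $[0, a-1]$ satisfies $0 \leq a-1-1 < a-1$ and, crucially, $a-1 \geq \mu$, so the hypothesis of Theorem~\ref{thm:hoeffding} applies with $b = a-1$ (in the sense required after complementation). This yields
\begin{equation*}
  \prob(X_{n,\mu} \leq a-1) \leq \prob\Bigl(\sum_{i=1}^{n} Y_i \leq a - 1\Bigr),
\end{equation*}
which, rearranged, is $\prob(X_{n,\mu} \geq a) \geq \prob(\sum Y_i \geq a)$. Substituting in the two templates for $Y_i$ gives \eqref{eq:hoeffding1} and \eqref{eq:hoeffding2} respectively.

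For the reversed inequality when $\mu \geq a$, I would instead apply Theorem~\ref{thm:hoeffding} directly with $b = n$ (no complementation needed), which requires $a \leq \mu - 1$; the same two templates for $(p_i)$ then produce the reversed versions of \eqref{eq:hoeffding1} and \eqref{eq:hoeffding2}. There is really no main obstacle here: once one notices that splitting off one coordinate as deterministic ($p_n\in\{0,1\}$) converts $X_{n-1,\mu}$ and $X_{n-1,\mu-1}$ into sums of $n$ independent Bernoullis with mean $\mu$, Hoeffding's theorem does all the work, and the only small thing to check is that the integrality hypotheses on $a$ and $\mu$ make the interval $[0,a-1]$ (or $[a,n]$) fit the hypothesis of Theorem~\ref{thm:hoeffding}.
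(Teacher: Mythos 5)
Your proposal is correct and takes essentially the same route as the paper's own proof: the same two Bernoulli vectors $(0,\mu/(n-1),\dotsc,\mu/(n-1))$ and $(1,(\mu-1)/(n-1),\dotsc,(\mu-1)/(n-1))$, followed by Theorem~\ref{thm:hoeffding} applied to the complemented interval $[0,a-1]$ (and to $[a,n]$ directly for the reversed case). The only thing worth flagging is that the verification $a-1\geq\mu$ uses the standing assumption that $\mu$ is an integer, and that the reversed case genuinely needs $a\leq\mu-1$ rather than $a\leq\mu$; but the paper's proof is no more explicit on these points than you are.
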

\begin{proof}
  Let \(Y_{1}, \dotsc, Y_{n}\) be as in the statement of
  Theorem~\ref{thm:hoeffding}. Note that \(X_{n-1,\mu}\) and \(X_{n-1,\mu-1}+1\)
  have the same distribution as \(\sum_{i=1}^{n}Y_{i}\) if the vector \((p_{1},
  \dotsc, p_{n})\) is chosen as \((0, \mu/(n-1), \dotsc, \mu/(n-1))\) or \((1,
  (\mu-1)/(n-1), \dotsc, (\mu-1)/(n-1))\), respectively. The statement now
  follows from Theorem~\ref{thm:hoeffding} by taking \(a = 0\) and \(b = l-1\)
  in Equation~\ref{eq:hoeffding}.
\end{proof}

Using vectors of the type \((0, p, \dotsc, p)\) and \((1, p, \dotsc, p)\) to
extract useful inequalities this way from the Shur-convexity of functions is not
novel. Using a different majorization inequality, it was applied by
Proschan\cite{Proschan1965} to show a somewhat similar argument saying that, in
some circumstances, the convergence of averages of random variables to the
limiting mean is monotone. Other examples can be found in, for example, the book
of Karlin[p.289~and~387]\cite{Karlin1968}.

\begin{proof}[Proof of Theorem~\ref{thm:binomial1}]
  The proof is now a matter of following the ``lattice'' of inequalities
  in Figure~\ref{fig:binomiallattice} (established in Corollary~\ref{cor:ineqlat}) downward.

  Let \(l \geq 1\) and \(\mu = 1, \dotsc, n-l\) as in the statement. Iteratively 
  applying Equation~\ref{eq:hoeffding1} gives
  \begin{equation*}
    \prob(X_{n,\mu} \geq \mu+l) \geq \prob(X_{n-1,\mu} \geq \mu+l) \geq \cdots \geq \prob(X_{\mu+l,\mu} \geq \mu+l).
  \end{equation*}
  Since \(\prob(X_{\mu+l,\mu} \geq \mu+l) = \prob(X_{\mu+l,\mu} = \mu+l) =
  (\mu/(\mu+l))^{\mu+l}\) this gives the first lower bound of the statement.
  Similarly applying Equation~\ref{eq:hoeffding2} now gives the second one,
  since
  \begin{equation*}
    \prob(X_{\mu+l,\mu} \geq \mu+l) \geq \prob(X_{\mu+l-1,\mu-1} \geq \mu+l-1) \geq \cdots \geq \prob(X_{l+1,1} \geq 1+l).
  \end{equation*}
  Which is the desired bound since
  \begin{equation*}
  \prob(X_{l+1,1} \geq 1+l) = \prob(X_{l+1,1}
  = l+1) = (1/(l+1))^{l+1}.
\end{equation*}
  The upper bound follows by iterating Equation~\ref{eq:hoeffding1} in the
  reverse direction, so that
  \begin{equation*}
    \prob(X_{n,\mu} \geq \mu+l)
    \leq
    \lim_{k \to \infty}\prob(X_{n+k,\mu+k} \geq \mu+k+l)
    =
    \prob(W \geq \mu) = 1/2.
  \end{equation*}
  where \(W \sim \pnorm(\mu, 1)\).\qedhere{}
\end{proof}

To get the general case of the corollary one need now only use domination of
\(X_{n,\mu}\) for \(\mu\) increasing and be slightly careful to round \(\mu\) in
the correct direction.

\begin{proof}[Proof of Corollary~\ref{cor:binom2}]
  Let \(l \geq 0\) be an integer, as in the statement. Then
  \begin{equation*}
    \begin{split}
      \prob(X_{n, \mu} \geq \mu + l)
      &=
      \prob(X_{n, \mu} \geq \lceil \mu + l \rceil)
      \\&\geq
      \prob(X_{n, \mu} \geq \lfloor \mu \rfloor + l + 1)
      \geq
      \prob(X_{n, \lfloor\mu\rfloor} \geq \lfloor \mu \rfloor + l + 1)
    \end{split}
  \end{equation*}
  where the first inequality is an equality when \(\mu\) is an integer and the
  second inequality follows from \(X_{n,\mu}\) stochastically dominating
  \(X_{n,\mu'}\) whenever \(\mu \geq \mu'\) (see for example \cite{Doerr2018}).
  The first part of the statement is now a direct application of
  Theorem~\ref{thm:binomial1} and the second part follows analogously.
\end{proof}

The main part of the following proof is essentially one for Corollary~2.2 of
\citeauthor{Anderson1967}\cite{Anderson1967}. Since they do not include an
explicit proof we give one here.

\begin{proof}[Proof of Theorem~\ref{thm:poisson1}]
  Let \(\lambda \geq 1\) and \(l \geq 1\) be integers, take \(\mu = \lambda <
  \lambda + l = a\) in Equation~\ref{eq:hoeffding1}, and let \(n\) tend to
  infinity. This gives
  \begin{equation*}
    \lim_{n \to \infty}\prob(X_{n,\lambda} \geq \lambda+l)
    \geq
    \lim_{n \to \infty}\prob(X_{n-1,\lambda-1} \geq \lambda-1+l).
  \end{equation*}
  Both \(X_{n,\lambda}\) and \(X_{n-1,\lambda-1}\) tend to Poisson distributed
  limits with expected values \(\lambda\) and \(\lambda-1\),
  respectively. In other words we have
  \begin{equation*}
    \prob(Z_{\lambda} \geq \lambda+l)
    \geq
    \prob(Z_{\lambda-1} \geq \lambda-1+l).
  \end{equation*}
  Iterating this inequality we get the desired lower bound \(\prob(Z_{\lambda}
  \geq \lambda+l) \geq \prob(Z_{1} \geq 1+l) = 1 - \sum_{k=1}^{l}1/(k!e)\).

  Similarly for \(\lambda > l \geq 1\) plugging \(\mu = \lambda \geq \lambda - l
  + 1 = a\) into Equation~\ref{eq:hoeffding1} and letting \(n\) tend to infinity
  gives
  \begin{equation*}
    \begin{split}
      \prob(Z_{\lambda} \leq \lambda-l)
      &=
      1-\prob(Z_{\lambda} \geq \lambda-l+1)
      \\&\geq
      1-\prob(Z_{\lambda-1} \geq \lambda-l)
      \\&=
      \prob(Z_{\lambda-1} \geq \lambda-1-l).
    \end{split}
  \end{equation*}
  Iterating, again, gives the second bound \(\prob(Z_{\lambda} \leq \lambda-l)
  \geq \prob(Z_{l} = 0) = e^{-l}\).
\end{proof}

We now turn to the proof of Theorem~\ref{thm:betamono}, starting with
establishing the concentration behaviour of \(\pbeta(a, b)\) as \(a\) and \(b\)
both increase.

\begin{lemma} \label{lemma:prepowerdist} Let \(X\) and \(Y\) be random variables
  on some measurable space \(\mathcal{X}\) with densities \(f_{X}\) and
  \(f_{Y}\) relative to a reference measure \(\mu\).

  If \(f \colon \mathcal{X} \to \R\) and \(C \in \R\) are such that \(f_{X}(x) >
  f_{Y}(x)\) if and only if \(f(x) > C\) then for all \(t \in \R\)
  \begin{equation*}
      \prob(f(X) > t) \geq \prob(f(Y) > t).
  \end{equation*}
\end{lemma}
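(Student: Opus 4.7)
The plan is to express the difference $\prob(f(X) > t) - \prob(f(Y) > t)$ as an integral of $f_X - f_Y$ and then exploit the hypothesis that the sign of $f_X - f_Y$ is controlled entirely by the value of $f$ relative to the threshold $C$. Concretely, since both $f_X$ and $f_Y$ are densities (with respect to the common reference measure $\mu$), one has
\begin{equation*}
\prob(f(X) > t) - \prob(f(Y) > t) = \int_{\{f > t\}} (f_X - f_Y)\, d\mu = -\int_{\{f \leq t\}} (f_X - f_Y)\, d\mu,
\end{equation*}
so it suffices to show that one of these two integrals has the right sign. I will choose whichever of the two representations makes the integrand have a consistent sign on the region of integration.

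I would then split into two cases based on how $t$ compares to $C$. In the case $t \geq C$, the set $\{f > t\}$ is contained in $\{f > C\}$, which by hypothesis coincides with $\{f_X > f_Y\}$; hence $f_X - f_Y \geq 0$ on the region of integration in the first representation, and the difference of probabilities is non-negative. In the case $t < C$, the set $\{f \leq t\}$ is contained in $\{f \leq C\} = \{f_X \leq f_Y\}$ (taking complements of the hypothesis), so in the second representation the integrand $f_X - f_Y$ is $\leq 0$ and the leading minus sign flips this to give a non-negative total.

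There is essentially no obstacle here: the argument is bookkeeping around the iff-condition, and the only mild subtleties are that the hypothesis concerns strict inequality (so sets like $\{f = C\}$, where $f_X$ and $f_Y$ may agree or disagree, contribute zero to $f_X - f_Y$ on the boundary and can be absorbed into either region without loss) and that one must verify the two representations of the probability difference are valid, which follows immediately from $\int f_X\, d\mu = \int f_Y\, d\mu = 1$. The lemma is thus a two-line consequence of the hypothesis once the right algebraic identity is written down.
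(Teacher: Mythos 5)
Your argument is correct and is essentially the paper's own proof: both split on whether \(t\) is above or below \(C\) and use that \(f_X - f_Y\) has a fixed sign on \(\{f > t\}\) (resp.\ \(\{f \leq t\}\)), your identity \(\int_{\{f>t\}}(f_X-f_Y)\,d\mu = -\int_{\{f\leq t\}}(f_X-f_Y)\,d\mu\) being just a repackaging of the paper's step \(\prob(f(X)>t) = 1-\prob(f(X)\leq t)\). The handling of the boundary set \(\{f=C\}\) and the normalisation \(\int f_X\,d\mu=\int f_Y\,d\mu=1\) are exactly the points that need the care you give them.
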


\begin{proof}
  For any \(t > C\) we have
  \begin{equation*}
    \begin{split}
      \prob(f(X) > t) &= \int \mathbf{1}_{f > t}(x) f_{X}(x) \,\mu(dx) \\&\geq \int \mathbf{1}_{f > t}(x) f_{Y}(x) \,\mu(dx) = \prob(f(Y) > t),
    \end{split}
  \end{equation*}
  where \(\mathbf{1}_{f > t}(x) = 1\) if \(f(x) > t\) and \(0\) otherwise.

Similarly for \(t \leq C\)
\begin{equation*}
    \prob(f(X) > t) = 1 - \prob(f(X) \leq t)
    \geq 1 - \prob(f(Y) \leq t) = \prob(f(Y) > t). \qedhere
\end{equation*}
\end{proof}

Our interest lies in the following special case.

\begin{lemma} \label{lemma:powerdist1} Let \(X\) and \(Y\) be random variables
  with values in some measurable space \(\mathcal{X}\) with densities \(f_{X}(x)
  = C_{X}g_{X}(f(x))\) and \(f_{Y}(x) = C_{Y}g_{Y}(f(x))\) relative to some
  reference measure \(\mu\), some positive measurable \(g_{X},g_{Y} \colon \R
  \to \R\), \(f \colon \mathcal{X} \to \R\), and normalising constants \(C_{X},
  C_{Y} \in \R\).

  If \(g_{X}/g_{Y}\) is increasing, then for any \(t \in \R\) it holds that
  \begin{equation*}
      \prob(f(X) > t) \geq \prob(f(Y) > t).
  \end{equation*}
\end{lemma}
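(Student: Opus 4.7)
The plan is to deduce Lemma~\ref{lemma:powerdist1} from Lemma~\ref{lemma:prepowerdist}. Writing $h = g_X/g_Y$, the ratio $f_X(x)/f_Y(x) = (C_X/C_Y)\,h(f(x))$ depends on $x$ only through $f(x)$. Consequently the sign of $f_X(x) - f_Y(x)$ is determined by whether $h(f(x))$ exceeds the constant $C_Y/C_X$, and since $h$ is increasing the set $S := \{y \in \R : h(y) > C_Y/C_X\}$ is an upper half-line in $\R$.

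First I would dispose of the degenerate cases: if $S = \emptyset$ then $f_X \leq f_Y$ pointwise, which together with $\int f_X\,d\mu = \int f_Y\,d\mu = 1$ forces $f_X = f_Y$ $\mu$-a.e., making the conclusion trivial; the case $S = \R$ would similarly force $f_X > f_Y$ pointwise and is ruled out for the same reason. Otherwise I would set $C := \inf S \in \R$; monotonicity of $h$ then gives $f_X(x) > f_Y(x)$ whenever $f(x) > C$, and $f_X(x) \leq f_Y(x)$ whenever $f(x) < C$.

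Finally I would invoke Lemma~\ref{lemma:prepowerdist}. If $C \notin S$ then its hypothesis ``$f_X(x) > f_Y(x)$ iff $f(x) > C$'' holds verbatim and the conclusion is immediate. If instead $C \in S$ (which forces $h$ to jump past $C_Y/C_X$ at $C$) the equivalence reads $f_X > f_Y$ iff $f \geq C$, but inspecting the short proof of Lemma~\ref{lemma:prepowerdist} shows the argument carries over unchanged: for $t \geq C$ the integrand $\mathbf{1}_{f > t}(f_X - f_Y)$ is non-negative, while for $t < C$ the complementary integrand $\mathbf{1}_{f \leq t}(f_X - f_Y)$ is non-positive. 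The only even mildly delicate point is this boundary case $\{f = C\}$; but since for every $t$ it always lies on the correct side of the relevant indicator, no genuine obstacle arises and the proof amounts to little more than bookkeeping around the threshold.
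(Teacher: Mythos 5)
Your proposal is correct and follows essentially the same route as the paper: reduce to Lemma~\ref{lemma:prepowerdist} by observing that $f_X/f_Y$ depends on $x$ only through $f(x)$ and that monotonicity of $g_X/g_Y$ yields a threshold $C$ separating $\{f_X>f_Y\}$ from its complement. The paper packages the threshold as $C=G^{-}(C_Y/C_X)$ via a generalised inverse, whereas you extract it as $\inf S$ and explicitly check the degenerate cases and the boundary set $\{f=C\}$ against the proof of Lemma~\ref{lemma:prepowerdist}; this is the same argument, carried out with a bit more care than the paper itself takes.
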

\begin{proof}
  Denote by \(G^{-}\) the, necessarily, increasing (generalised) inverse of
  \(g_{X}/g_{Y}\). Then \[f_{X}(x) = C_{X}g_{X}(f(x)) \leq f_{Y}(x) =
    C_{Y}g_{Y}(f(x))\] if and only if \[g_{X}(f(x))/g_{Y}(f(x)) \leq
    C_{Y}/C_{X}\] if and only if \(f(x) \leq C = G^{-}(C_{Y}/C_{X})\). The
  statement now follows from Lemma~\ref{lemma:prepowerdist}.
\end{proof}

The special case of Lemma~\ref{lemma:powerdist1} with \(g_{X}(x) = x\),
\(g_{Y}(x) = 1\) and \(\mu = g \nu\) for some positive, measurable \(g\) and
other reference measure \(\nu\) corresponds to the case with densities
\(f_{X}(x) = C_{X}f(x)g(x)\) and \(f_{Y}(x) = C_{Y}g(y)\) (relative to \(\nu\)).

By passing to conditional distributions, Lemma~\ref{lemma:powerdist1}
generalises to conditional probabilities.

\begin{lemma} \label{lemma:powerdist2} Let \(X\) and \(Y\) be random variables
  with values in some measurable space \(\mathcal{X}\) with distributions given
  by densities \(f_{X}(x) = C_{X}g_{X}(f(x))\) and \(f_{Y}(x) =
  C_{Y}g_{Y}(f(x))\) relative to some reference measure \(\mu\), some positive
  measurable \(g_{X},g_{Y} \colon \R \to \R\), \(f \colon \mathcal{X} \to \R\),
  and normalising constants \(C_{X}, C_{Y} \in \R\).

  If \(g_{X}/g_{Y}\) is increasing, then for any \(t \in \R\) and measurable \(A
  \subset \R\) such that \(\prob(X \in A), \prob(Y \in A) > 0\) it holds that
  \begin{equation*}
      \prob(f(X) > t \mid X \in A) \geq \prob(f(Y) > t \mid Y \in A).
  \end{equation*}
\end{lemma}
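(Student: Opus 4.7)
The plan is to reduce the statement directly to Lemma~\ref{lemma:powerdist1} by passing to conditional distributions relative to a restricted reference measure. Observe that the crucial structural feature used in the hypothesis of Lemma~\ref{lemma:powerdist1} is that the densities factor through \(f\) via some \(g_X\) and \(g_Y\) with \(g_X/g_Y\) monotone; this functional form is preserved under conditioning, since restricting a density to a set only multiplies by an indicator and renormalises.

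Concretely, assuming \(\prob(X \in A), \prob(Y \in A) > 0\), I would introduce the restricted reference measure \(\mu_A\) defined by \(\mu_A(B) = \mu(A \cap B)\). The conditional distribution of \(X\) given \(X \in A\) admits the density
\[
\tilde f_X(x) = \tilde C_X g_X(f(x))
\qquad\text{with}\qquad \tilde C_X = C_X/\prob(X \in A)
\]
relative to \(\mu_A\), as one verifies by writing, for any measurable \(B\),
\[
\prob(X \in B \mid X \in A) = \frac{1}{\prob(X \in A)}\int_{A \cap B} C_X g_X(f(x))\, \mu(dx) = \int_B \tilde C_X g_X(f(x))\, \mu_A(dx).
\]
An analogous statement holds for \(Y\) with \(\tilde C_Y = C_Y/\prob(Y \in A)\) and \(g_Y\).

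These conditional distributions therefore satisfy the hypotheses of Lemma~\ref{lemma:powerdist1} with the same \(f\), \(g_X\), \(g_Y\), only with modified normalising constants and the new reference measure \(\mu_A\). The monotonicity of \(g_X/g_Y\) is a property of these functions alone and is unaffected by the change. Applying Lemma~\ref{lemma:powerdist1} to these conditional laws yields precisely the desired inequality
\(\prob(f(X) > t \mid X \in A) \geq \prob(f(Y) > t \mid Y \in A)\). There is essentially no obstacle here beyond the bookkeeping of normalising constants: the positivity assumptions on \(\prob(X \in A)\) and \(\prob(Y \in A)\) ensure the conditional densities are well-defined, and the rest is automatic.
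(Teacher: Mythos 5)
Your proof is correct and follows essentially the same route as the paper: both pass to the conditional laws, which have densities of the same functional form \(\tilde C\, g(f(x))\) with respect to the restriction of \(\mu\) to \(A\), and then invoke Lemma~\ref{lemma:powerdist1}. The explicit verification of the conditional density and the identification \(\tilde C_X = C_X/\prob(X \in A)\) match the paper's argument.
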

\begin{proof}
  There exist random variables \(X'\) and \(Y'\), possibly on some other
  underlying space, with values in \(\mathcal{X}\) and with distributions given
  by densities \(f_{X'}(x) = C_{X'}g_{X}(f(x))\) and \(f_{Y'}(x) =
  C_{Y'}g_{Y}(f(x))\) relative to \(\mu'\), the restriction of \(\mu\) to \(A\).
  Necessarily \(C_{X'} = C_{X}/\prob(X \in A)\) and \(C_{Y'} = C_{Y}/\prob(Y \in
  A)\).

  By construction \(X'\) and \(Y'\) are such that for any measurable \(B\) one
  has \(\prob(X \in B \mid X \in A) = \prob(X' \in B)\) and \(\prob(Y \in B \mid
  Y \in A) = \prob(Y' \in B)\). In particular \(\prob(f(X) > t \mid X \in A) =
  \prob(f(X') > t)\) and \(\prob(f(Y) > t \mid Y \in A) = \prob(f(Y') > t)\).
  The statement follows by applying Lemma~\ref{lemma:powerdist1} to \(X'\) and
  \(Y'\).
\end{proof}

This conditional version becomes useful, to us, by considering sets \(A\) where
\(f\) is monotone.

\begin{lemma}\label{lemma:powerdist3}
  For \(c \geq a > 0\) and \(d \geq b > 0\) let \(X \sim \pbeta(a, b)\) and \(Y
  \sim \pbeta(c, d)\). Introducing \(p = (c-a)/((c+d)-(a+b))\) it holds for
  any \(t \leq s \leq p\)that
    \begin{equation*}
        \prob(X < t \mid X < s) \leq \prob(Y < t \mid Y < s)
    \end{equation*}
    while for any \(p \leq s \leq t\) we have
    \begin{equation*}
        \prob(X > t \mid X > s) \leq \prob(Y > t \mid Y > s).
    \end{equation*}
\end{lemma}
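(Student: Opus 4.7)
The plan is to reduce each inequality to an application of Lemma~\ref{lemma:powerdist2} by analysing the Beta density ratio
\begin{equation*}
  \varphi(x) := \frac{x^{a-1}(1-x)^{b-1}}{x^{c-1}(1-x)^{d-1}} = x^{a-c}(1-x)^{b-d}.
\end{equation*}
First I would compute the logarithmic derivative
\begin{equation*}
  \frac{\varphi'(x)}{\varphi(x)} = \frac{(a-c) + ((c+d)-(a+b))\,x}{x(1-x)},
\end{equation*}
whose numerator is linear in $x$, non-positive at $x = 0$ (since $a \leq c$), non-negative at $x = 1$ (since $b \leq d$), and vanishes precisely at $x = (c-a)/((c+d)-(a+b)) = p$. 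Hence $\varphi$ is monotonically decreasing on $(0,p)$ and monotonically increasing on $(p,1)$, with a minimum at $p$ (and degenerating to a one-sided monotonicity in the edge cases $c = a$ or $d = b$).

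For the upper-tail claim, with $p \leq s \leq t$, I would apply Lemma~\ref{lemma:powerdist2} with $f(x) = x$ and $A = (s, 1)$. Because $s \geq p$, the set $A$ lies entirely in the region where $\varphi$ is monotone in one direction, so $g_X/g_Y = \varphi$ satisfies the monotonicity hypothesis of the lemma on $A$; the conclusion then gives exactly the desired comparison between $\prob(X > t \mid X > s)$ and $\prob(Y > t \mid Y > s)$.

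For the lower-tail claim, with $t \leq s \leq p$, taking $f(x) = x$ directly is not quite what the lemma requires, since it produces a statement about $\{f(X) > \tau\}$ rather than $\{X < \tau\}$. I would therefore use the order-reversing map $f(x) = -x$ on $A = (0, s)$, so that $\{X < t\} = \{f(X) > -t\}$; the density ratio of $X$ to $Y$, pulled through the substitution $u = -x$, inherits its monotonicity on $(-s, 0)$ from $\varphi$ on $(0, s) \subset (0, p)$. Lemma~\ref{lemma:powerdist2} then delivers the corresponding comparison of conditional lower-tail probabilities. The main bookkeeping hurdle is tracking the direction in which $\varphi$ is monotone on each relevant subinterval and how the order-reversing substitution $f(x) = -x$ interacts with that monotonicity in the lower-tail case; once the unimodal shape of $\varphi$ with critical point exactly at $p$ is pinned down, both parts follow from a single invocation each of Lemma~\ref{lemma:powerdist2}.
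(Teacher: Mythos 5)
Your mechanism is essentially the paper's: both proofs reduce the claim to Lemma~\ref{lemma:powerdist2} via the observation that the density ratio of the two Beta laws is unimodal with critical point exactly at \(p\). The paper packages this by taking \(f(x) = x^{p}(1-x)^{1-p}\) as the statistic, so that the ratio of densities is a \emph{globally} monotone function of \(f\), and then uses the invertibility of \(f\) on \([0,s]\) and on \([s,1]\) to translate back to events about \(X\); you instead take \(f(x) = \pm x\) and use the monotonicity of \(\varphi\) only on \(A\). That is workable, but note that Lemma~\ref{lemma:powerdist2} as stated assumes \(g_X/g_Y\) increasing on all of \(\R\); you should say explicitly why it localises (the conditional densities on \(A\) only involve the values of \(g_X, g_Y\) on \(f(A)\), and an increasing function on an interval extends to an increasing function on \(\R\)), or else follow the paper's choice of \(f\), for which no localisation is needed.

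The genuine problem is the direction of the inequalities, which you wave through with ``gives exactly the desired comparison.'' You correctly establish that \(g_X/g_Y = \varphi\) is \emph{increasing} on \((p,1)\supseteq(s,1)\), so Lemma~\ref{lemma:powerdist2} with \(f(x)=x\) yields \(\prob(X>t\mid X>s) \geq \prob(Y>t\mid Y>s)\) --- the reverse of the displayed claim; likewise your \(f(x)=-x\) argument on \((0,s)\) yields \(\prob(X<t\mid X<s)\geq\prob(Y<t\mid Y<s)\). The reversed inequalities are in fact the correct ones: for \(X\) uniform, \(Y\sim\pbeta(2,2)\), \(s=p=1/2\), \(t=0.9\) one gets \(0.2\) versus \(0.056\), contradicting the displayed \(\leq\); the surrounding text (``shifts mass away from both tails towards \(p\)'') and the invocation in the proof of Theorem~\ref{thm:betamono}, namely \(\prob(W_m > t \mid W_m > p) \leq \prob(W_n > t \mid W_n > p)\) with \(W_m\) the variable with the larger parameters, both use the reversed form. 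So the statement as printed has the roles of \(X\) and \(Y\) (equivalently, the inequality signs) swapped; your method proves the intended, corrected statement, but as written your proof asserts that it lands on the printed inequality when it actually lands on its reverse. You need to either flag the sign clash or correct the target statement rather than declaring agreement.
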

\begin{proof}
  Introduce \(f(x) = x^p(1-x)^{1-p}\) and let \(\mu\) be the (finite) measure
  given by the (Lebesgue) density \(h(x) = x^{a-1}(1-x)^{b-1}f(x)^{m}\) where \(m =
  a+b\). Relative to \(\mu\) the distributions of \(X\) and \(Y\) have densities
  \(f_{X}(x) = c_{X}f(x)^{n-m}\) and \(f_{Y}(x) = c_{Y}\) for appropriate
  normalising constants \(c_X\) and \(c_Y\) and where \(n = c+d\). Since
  \(x^{n-m}\) is increasing in \(x\) when \(n>m\) it follows from
  Lemma~\ref{lemma:powerdist2} that
  \begin{equation} \label{eq:powerdist3main}
      \prob(f(X) > t \mid X \in A) \geq \prob(f(Y) > t \mid Y \in A)
  \end{equation}
  for any measurable \(A \subset \R\) and \(t \in \R\).

  Note that \(f\) is (strictly) increasing on \([0, p]\) and denote by
  \(f^{-1}_{-}\) its inverse on \([0, f(p)]\). Applying
  Equation~\ref{eq:powerdist3main} to \(A = [0, s]\) for any \(s \leq p\) yields
  \begin{equation*}
    \prob(X > f^{-1}_{-}(t) \mid X < s) \geq \prob(Y > f^{-1}_{-}(t) \mid Y < s).
  \end{equation*}
  Since \(t \in \R\) is arbitrary we may conclude that
  \begin{equation*}
    \prob(X < t \mid X < s) \leq \prob(Y < t \mid Y < s).
  \end{equation*}

  Similarly, \(f\) is (strictly) decreasing on \([p, 1]\) so that following the
  same steps with \(A = [s, 1]\) for any \(s \geq t\) yields
  \begin{equation*}
    \prob(X > t \mid X > s) \leq \prob(Y > t \mid Y > s). \qedhere
  \end{equation*}
\end{proof}

Lemma~\ref{lemma:powerdist3} shows that increasing \(a\) to \(c\) and \(b\) to
\(d\) shifts mass away from both tails of the beta distribution towards \(p\).
The remainder of the proof of Theorem~\ref{thm:betamono} will be concerned with
translating this to a statement concerning the relative size of \(\prob(X < p)\)
to \(\prob(X > p)\). Here the main idea will be that for certain parameters the
major contribution to non-equality of these two probabilities occurs closer to
the tails (at \(0\) and \(1\)), away from \(p\). In these cases, concentrating
the distribution at \(p\) will therefore make the two probabilities more equal,
meaning closer to \(1/2\).

\begin{lemma}\label{lemma:oddsexpect}
  Let \(r \colon [0, 1] \to [0, 1]\) be a smooth, strictly decreasing involution
  with a fixed point at \(p \in [0, 1]\). In other words, \(r(r(x)) = x\) for
  all \(x \in [0, 1]\) and \(r(p) = p\).

  If \(X\) is a random variable on \([0, 1]\) with a distribution given by a
  (Lebesgue) density \(f_{X}(x) = f(x)g(x)\) where \(g > 0\) and \(f(r(x)) =
  f(x)\) for all \(x \in [0, 1]\). Then
  \begin{align*}
    \frac{\prob(X \geq p)}{1-\prob(X \geq p)}
    &=
      \expect\Big(-r'(X)\frac{g(r(X))}{g(X)} \, \Big| \, X < p \Big)
      \intertext{and}
      \frac{\prob(X \leq p)}{1-\prob(X \leq p)}
    &=
      \expect\Big(-r'(X)\frac{g(r(X))}{g(X)} \, \Big| \, X > p \Big).
  \end{align*}
\end{lemma}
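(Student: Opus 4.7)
The plan is to prove both identities by a direct change of variables $y = r(x)$ in the tail integral, using that $r$ is an involution with fixed point $p$ and that $f$ is invariant under $r$. Since $r \colon [0,1] \to [0,1]$ is a smooth, strictly decreasing involution with $r(p) = p$, it must satisfy $r(0) = 1$, $r(1) = 0$, and map $[0,p]$ bijectively onto $[p,1]$. Moreover $-r'(x) > 0$ everywhere, and $\prob(X = p) = 0$ since $X$ has a density, so $1 - \prob(X \geq p) = \prob(X < p)$.

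For the first identity, I would start from
\begin{equation*}
  \prob(X \geq p) = \int_{p}^{1} f(x) g(x) \, dx
\end{equation*}
and substitute $x = r(y)$, $dx = r'(y)\, dy$. The limits transform via $r(p) = p$ and $r(1) = 0$, so after flipping them we obtain
\begin{equation*}
  \prob(X \geq p) = -\int_{0}^{p} f(r(y)) g(r(y)) r'(y) \, dy = \int_{0}^{p} f(y) g(r(y)) (-r'(y)) \, dy,
\end{equation*}
where in the last equality I used the hypothesis $f(r(y)) = f(y)$ to convert the $f$ factor back. Dividing by $\prob(X < p) = \int_{0}^{p} f(y) g(y) \, dy$ yields
\begin{equation*}
  \frac{\prob(X \geq p)}{1 - \prob(X \geq p)} = \frac{\int_{0}^{p} \bigl(-r'(y) g(r(y))/g(y)\bigr) f(y) g(y) \, dy}{\int_{0}^{p} f(y) g(y) \, dy},
\end{equation*}
which is the definition of $\expect\bigl(-r'(X) g(r(X))/g(X) \,\big|\, X < p\bigr)$ since the density of $X$ restricted to $[0,p]$ is proportional to $f g$.

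The second identity is obtained by the symmetric substitution, starting instead from $\prob(X \leq p) = \int_{0}^{p} f(x)g(x)\, dx$ and again setting $x = r(y)$. This produces
\begin{equation*}
  \prob(X \leq p) = \int_{p}^{1} f(y) g(r(y)) (-r'(y)) \, dy,
\end{equation*}
and dividing by $\prob(X > p) = \int_{p}^{1} f(y)g(y)\,dy$ gives the desired conditional expectation. There is no real obstacle here; the only thing to be careful about is tracking the sign from $r'(y) < 0$ together with the reversal of limits, and noting that the invariance $f \circ r = f$ is exactly what makes the $f$-factor cancel so that the Radon--Nikodym weight needed to convert the integral into a conditional expectation with respect to the law of $X$ on the complementary half-interval is precisely $-r'(y) g(r(y))/g(y)$.
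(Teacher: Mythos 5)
Your proof is correct and follows essentially the same route as the paper's: a change of variables $y = r(x)$ in the tail integral, using the involution property to transform the limits, the invariance $f \circ r = f$ to cancel the $f$-factor, and division by the complementary probability to recognise the conditional expectation. No gaps.
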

\begin{proof}
  The statement follows by a simple calculation involve a change of variables
  using the decreasing involution \(r\):
    \begin{equation*}
        \begin{split}
            \prob(X \geq p)
            &=
            \int_{p}^{1} f_{X}(x) \, dx
            \\&=
            \int_{p}^{1} g(x)f(x) \, dx
            \\&=
            \int_{0}^{p} g(r(x))f(r(x))(-r'(x)) \, dx
            \\&=
            \int_{0}^{p} -r'(x)\frac{g(r(x))}{g(x)} g(x)f(x) \, dx
            \\&=
            \int_{0}^{p} -r'(x)\frac{g(r(x))}{g(x)} f_{X}(x) \, dx.
        \end{split}
    \end{equation*}
    Dividing both sides by \(1-\prob(X \geq p) = \prob(X < p)\) gives the first
    equation. The second one follows similarly.
\end{proof}

Recall that whenever \(X\) stochastically dominates \(Y\), then \(\expect(h(X))
\geq \expect(h(Y))\) for any increasing \(h\). The representation in
Lemma~\ref{lemma:oddsexpect} becomes useful for comparing distributions of
random variables \(X\) and \(Y\) with a common \(r\), where \(g\) differs only
by multiplication with a constant, one variable conditionally dominates the
other, and the function inside the expression is monotone on at least one of the
two intervals \([0, p]\) and \([p, 1]\).

Visually, the existence of a pair of functions \(f\) and \(r\) as in the
statement of Lemma~\ref{lemma:oddsexpect} means \(f\) is (anti-)unimodal with
\(r\) reflecting a point \(x \in [0, 1]\) around the (anti-)mode \(p\) of \(f\).
We begin by working out the behaviour of the reflection function \(r\) in the
case of the family of Beta distributions.

\begin{lemma} \label{lemma:rprops} For \(a, b \in \R \setminus \{0\}\) of the
  same sign there exists a unique continuous decreasing function \(r_{p} \colon
  [0, 1] \to [0, 1]\), depending only on \(p = a/(a+b) \in (0, 1)\), such that
\begin{equation} \label{eq:reflectme}
    x^{a}(1-x)^{b} = r_{p}(x)^{a}(1-r_{p}(x))^{b}
\end{equation}
for \(x \in (0, 1)\).

Moreover, the family of functions \(r_{p}\) for \(p \in (0, 1)\) thus defined
satisfy the following properties:
\begin{enumerate}[label=\arabic*),ref=\arabic*]
  \item \(r_{p}\) is a smooth involution with a fixed point at \(p\), meaning
  \(r_{p}(r_{p}(x)) = x\) and \(r_{p}(p) = p\). In particular \(r_{p}(0) = 1\)
  and \(r_{p}(1) = 0\). \label{prop:rbasics}
  \item \label{prop:rincp} If \(p < q\) then \(r_{p}(x) < r_{q}(x)\) for all \(x
    \in (0, 1)\).
  \item \label{prop:rpropsbasicskew} If \(p < 1/2\) then \(2p - x \leq r_{p}(x)
    < 1 - x\) for \(x \in (0, 1)\). For \(p > 1/2\) the inequalities are
    reversed and for \(p = 1/2\) the inequalities are equalities.
  \item \(r_{p}\) satisfies the differential equation  \label{prop:rpropsreflectderiv}
  \[(p-r_{p}(x))x(1-x)r_{p}'(x)
  =
  (p-x)r_{p}(x)(1-r_{p}(x))\]
  for \(x \in (0, 1)\).
  \item \(r_{p}\) satisfies the differential equation 
    \label{prop:rpropsreflectderiv2}
  \begin{equation*}
  \begin{split}
      &(p-x)x(1-x)(p-r_{p}(x))^{2}r_{p}''(x)
  \\&\quad=
  p(1-p)(2p - x - r_{p}(x))(r_{p}(x) - x)r_{p}'(x)
  \end{split}
  \end{equation*}
  for \(x \in (0, 1)\).
  \end{enumerate}
\end{lemma}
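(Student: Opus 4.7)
The key idea is to parametrize the defining equation purely in $p$ by passing to logarithms. Set $\phi_p(x) = p \log x + (1-p) \log(1-x)$ for $x \in (0,1)$. Since $a$ and $b$ have the same sign, so does $a+b$, hence taking logarithms in Equation~\ref{eq:reflectme} and dividing by $a+b$ reduces it to the $p$-only condition $\phi_p(x) = \phi_p(r_p(x))$. Now $\phi_p$ is smooth and strictly concave on $(0,1)$ with $\phi_p'(x) = (p-x)/[x(1-x)]$ vanishing only at $x = p$, and $\phi_p(0^+) = \phi_p(1^-) = -\infty$. Hence for each $x \in (0,1) \setminus \{p\}$ there is a unique point on the opposite side of $p$ with the same $\phi_p$-value; calling it $r_p(x)$ and setting $r_p(p) = p$, $r_p(0) = 1$, $r_p(1) = 0$ by continuity produces the unique continuous decreasing function asserted.

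Property~\ref{prop:rbasics} follows because $x$ and $r_p(x)$ play symmetric roles as the two preimages of a common $\phi_p$-value. Smoothness on $(0,1) \setminus \{p\}$ comes from the implicit function theorem via $\phi_p'(r_p(x)) \neq 0$. At $x = p$ the Taylor expansion $\phi_p(p+t) = \phi_p(p) - t^2/[2p(1-p)] + O(t^3)$ lets the Morse lemma supply a smooth local coordinate $u$ with $u(p) = 0$, $u'(p) \neq 0$, and $\phi_p = \phi_p(p) - u^2$ near $p$; then $r_p = u^{-1} \circ (-u)$ locally, which is smooth with $r_p'(p) = -1$. For property~\ref{prop:rincp}, implicit differentiation of $\phi_p(r_p(x)) = \phi_p(x)$ in $p$ (using $\partial_p \phi_p(y) = \log[y/(1-y)]$) produces
\[
\partial_p r_p(x) \cdot \frac{p - r_p(x)}{r_p(x)(1-r_p(x))} = \log\frac{x}{1-x} - \log\frac{r_p(x)}{1-r_p(x)},
\]
and the two sides have matching signs (both negative for $x < p$, both positive for $x > p$), forcing $\partial_p r_p(x) > 0$.

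For property~\ref{prop:rpropsbasicskew}, the bound $r_p(x) < 1-x$ when $p < 1/2$ follows from $r_{1/2}(x) = 1-x$ (by the symmetry of $\phi_{1/2}$) combined with the monotonicity in $p$ already established. The bound $r_p(x) \geq 2p - x$ is the main conceptual obstacle and requires a global symmetry calculation rather than a local argument: a direct computation gives
\[
\frac{d}{dt}\bigl[\phi_p(p+t) - \phi_p(p-t)\bigr] = \frac{2 t^2 (1-2p)}{(p^2-t^2)((1-p)^2-t^2)} \geq 0
\]
whenever $p \leq 1/2$ and $|t| < \min(p, 1-p)$, so $\phi_p(2p - x) \geq \phi_p(x)$ on the relevant range. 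Since $\phi_p$ is monotone on each side of $p$, this translates into $r_p(x) \geq 2p - x$ in both cases $x < p$ and $x > p$, the statement being vacuous when $2p - x \notin [0,1]$; the $p > 1/2$ case is symmetric.

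Finally, property~\ref{prop:rpropsreflectderiv} follows by differentiating $\phi_p(r_p(x)) = \phi_p(x)$ once and clearing denominators using the formula for $\phi_p'$. Property~\ref{prop:rpropsreflectderiv2} comes from differentiating property~\ref{prop:rpropsreflectderiv} once more and eliminating the $(r_p'(x))^2$ terms via the first DE; this is the most computation-heavy step but entirely routine, the only difficulty being algebraic bookkeeping. A useful sanity check is the case $p = 1/2$, where $r_p(x) = 1-x$, $r_p'(x) = -1$, $r_p''(x) = 0$, and both sides of each differential equation vanish simultaneously.
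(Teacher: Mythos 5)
Your proof is correct and follows the same overall skeleton as the paper's: normalise the defining equation to a single-parameter function (you use \(\phi_p = \log f_p\) where the paper uses \(f_p(x)=x^p(1-x)^{1-p}\)), obtain existence and uniqueness from strict monotonicity of that function on either side of \(p\), get smoothness from the implicit function theorem (your Morse-lemma treatment of the degenerate point \(x=p\) is in fact more explicit than the paper's ``continuous extension''), and derive both differential equations by differentiating the defining identity. Two sub-arguments genuinely differ. For Property~\ref{prop:rincp} you differentiate the defining relation in \(p\) and read off the sign of \(\partial_p r_p(x)\), where the paper runs a three-case chain of algebraic equivalences comparing \(f_p(x)\) with \(f_p(r_q(x))\); your route is shorter, though you should add the observation that \(r_p(x)<x<r_q(x)\) whenever \(p<x<q\) (immediate from the sign of \(r_p(x)-x\)) so that strict monotonicity also holds across the point \(p=x\), where your derivative identity degenerates to \(0=0\). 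For the lower bound in Property~\ref{prop:rpropsbasicskew} you differentiate \(\phi_p(p+t)-\phi_p(p-t)\) in \(t\) --- the computation checks out --- whereas the paper proves monotonicity in \(p\) of \(((p-y)/(p+y))^p\) and reduces to the elementary inequality \((1/s-s)/2\ge-\log s\); yours is arguably more direct. One sign slip: your identity gives \(\phi_p(p+t)-\phi_p(p-t)\ge 0\) for \(t\ge 0\) and \(\le 0\) for \(t\le 0\), so the stated conclusion \(\phi_p(2p-x)\ge\phi_p(x)\) holds only for \(x<p\) and is reversed for \(x>p\); as you then correctly note, monotonicity of \(\phi_p\) on the appropriate side of \(p\) turns each of the two inequalities into \(r_p(x)\ge 2p-x\), so the conclusion stands. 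Finally, Property~\ref{prop:rpropsreflectderiv2} is only asserted to be routine; it is (the paper carries it out via \(h_p'=-\bigl(1+p(1-p)/(p-x)^2\bigr)h_p^2\) for the logarithmic derivative \(h_p\) of \(f_p\)), but a complete write-up should include that computation rather than defer it.
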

\begin{proof}
  Introduce \(f_{p}(x) = x^{p}(1-x)^{1-p}\). Note that \(x^{a}(1-x)^{b} =
  (x^{p}(1-x)^{1-p})^{a+b}\) when \(p = a+b\). In particular \(x^{a}(1-x)^{b} =
  y^{a}(1-y)^{b}\) if and only if \(f_{p}(x) = f_{p}(y)\). For \(r_{p}\) as in
  the statement it suffices therefore to consider Equation~\ref{eq:reflectme}
  for \(a = p\) and \(b = 1-p\), meaning \(f_{p}(r_{p}(x)) - f_{p}(x) = 0\).
  Existence now follows from the fact that \(f_{p}\) is strictly increasing on
  \([0, p]\) and then strictly decreasing on \([p, 1]\). Define \(r_{p}(x) =
  f^{-1}(f(x))\) where the inverse is taken on either \((p, 1)\) or \((0, p]\)
  depending on whether \(x \in (0, p)\) or \(x \in [p, 1)\). Clearly \(r_{p}\)
  satisfies Equation~\ref{eq:reflectme}. Smoothness and uniqueness follows by
  the implicit function theorem outside of \(x = p\) and at \(x = p\) by
  continuous extension.

  Similarly, the identity function is the unique \emph{increasing} function
  satisfying Equation~\ref{eq:reflectme}. This means in particular that
  \(r_{p}(r_{p}(x))\) must be the identify function.

  This proves the basic statement as well as Property~\ref{prop:rbasics}.

  For Property~\ref{prop:rincp} note that \(r_{p}(x) - x\) has the same sign as
  \(p - x\) and that, similarly, \(r_{q}(x) - x\) has the same sign as \(q -
  x\). We proceed now by three cases, depending on whether \(p \leq x \leq q\),
  \(x < p\), or \(x > q\).

  For \(x \in [p, q]\) we have \(r_{p}(x) < x < r_{q}(x)\) since \(x \leq
  r_{p}(x)\) if and only if \(x \leq p\) and \(x \leq r_{q}(x)\) if and only if
  \(x \leq q\).

  If \(x < p\) then \(r_{p}(x) > p\) and \(r_{q}(x) > q > p\). It follows that
  \(r_{p}(x) < r_{q}(x)\) if and only if \(f_{p}(x) > f_{p}(r_{q}(x)) =
  (r_{q}(x)/(1-r_{q}(x))^{p-q}f_{q}(x)\) if and only if \((x/(1-x))^{p-q} >
  (r_{q}(x)/(1-r_{q}(x))^{p-q}\) if and only if \(x/(1-x) <
  r_{q}(x)/(1-r_{q}(x)\) if and only if \(x < r_{q}(x)\) if and only if \(x <
  q\), which holds since \(x < p < q\).

  The case \(x > q\) follows analogously to the case \(x <
  p\).

  For Property~\ref{prop:rpropsbasicskew} note that \(r_{1/2}(x) = 1-x\) since
  \(f_{1/2}\) is symmetric. The upper bound now follows from
  Property~\ref{prop:rincp} since \(r_{p}(x) < r_{1/2}(x) = 1-x\) when \(p <
  1/2\). The inequality reversed when \(p > 1/2\).

  The lower bound for \(p < 1/2\) is trivial when \(x > 2p\). For \(x < 2p\) it
  may be rewritten as \(p - y \leq r_{p}(p + y)\) for \(y \in [-p, p]\). If \(y
  > 0\) then \(p - y < p\) and \(r_{p}(p + y) < p\). The statement is therefore
  equivalent to \(f_{p}(p - y) \leq f_{p}(p + y)\). If \(y < 0\) it is
  equivalent to \(f_{p}(p - (-y)) \leq f_{p}(p + (-y))\), so that it suffices to
  consider the case \(y > 0\).

  Expanding the definition \(f_{p}\) the inequality may be rewritten
  as \[\Big(\frac{p-y}{p+y}\Big)^{p} \leq \Big(\frac{1-p-y}{1-p+y}\Big)^{1-p}.\]
  Since \(p < 1-p\) it suffices that the left hand side be non-decreasing as a
  function of \(p \in [y, 1/2]\). Taking the logarithmic derivative and
  introducing \(s = (p-y)/(p+y)\) we see that it suffices to prove
  that \[\frac{1/s - s}{2} \geq -\log(s)\] for \(s \in (0, 1)\). This elementary
  inequality can be verified by observing that it is in equality when \(s = 1\)
  and then comparing derivatives on \(s \in (0, 1)\).

  For Property~\ref{prop:rpropsreflectderiv} the equation is trivial if \(x =
  p\), so assume \(x \neq p\). We begin by differentiating the equation
  \(f_{p}(r_{p}(x)) = f_{p}(x)\), yielding \(f_{p}'(r_{p}(x))r_{p}'(x) =
  f_{p}'(x)\). Next note that \(f_{p}\) has logarithmic derivative \(h_{p}(x) =
  \frac{p-x}{x(1-x)}\), so that \(f_{p}'(x) = f_{p}(x)h_{p}(x)\), for \(x \in
  (0, 1)\). In particular \(f_{p}'(r_{p}(x)) = f_{p}(x)h_{p}(r_{p}(x))\).
  Plugging this into the equation above and cancelling \(f_{p}(x)\) yields a
  first order differential equation
  \begin{equation}\label{eq:rfodeq}
      h_{p}(r_{p}(x)) r_{p}'(x) = h_{p}(x)
  \end{equation}
  or in other words
  \begin{equation*}
  \frac{p-r_{p}(x)}{r_{p}(x)(1-r_{p}(x))} r_{p}'(x) = \frac{p-x}{x(1-x)}.
  \end{equation*}
  Multiplying by \(x(1-x)r_{p}(x)(1-r_{p}(x))\) gives the desired equation.

  Continuing with Property~\ref{prop:rpropsreflectderiv2} we differentiate
  Equation~\ref{eq:rfodeq} which gives \[h_{p}(r_{p}(x))r_{p}''(x) = h_{p}'(x) -
    h_{p}'(r_{p}(x))(r_{p}'(x))^{2}.\] By direct computation one finds that the
  function \(h_{p}\), itself, satisfies \(h_{p}'(x) = -\Big(1 +
  \frac{p(1-p)}{(p-x)^{2}}\Big)h_{p}(x)^{2}\). Substituting this into the above and
  using Equation~\ref{eq:rfodeq} we find
  \begin{equation*}
  \begin{split}
    h_{p}(r_{p}(x))r_{p}''(x)
    &=
    h_{p}'(x)
    -
    h_{p}'(r_{p}(x))(r_{p}'(x))^{2}
    \\&=
    -\Big(1 + \frac{p(1-p)}{(p-x)^{2}}\Big)h_{p}(x)^{2}
    +\Big(1 + \frac{p(1-p)}{(p-r_{p}(x))^{2}}\Big)h_{p}(x)^{2}
    \\&=
    p(1-p)\frac{(p-x)^{2} - (p-r_{p}(x))^{2}}{(p-x)^{2}(p-r_{p}(x))^{2}}h_{p}(x)^{2}
    \\&=
    p(1-p)\frac{(2p-x-r_{p}(x))(r_{p}(x)-x)}{(p-x)^{2}(p-r_{p}(x))^{2}}h_{p}(x)^{2}.
  \end{split}
  \end{equation*}
  Multiplying both sides by \(r_{p}'(x)/h_{p}(x)\) and using
  Equation~\ref{eq:rfodeq} gives
  \begin{equation*}
  \begin{split}
    r_{p}''(x)
    &=
    p(1-p)\frac{(2p-x-r_{p}(x))(r_{p}(x)-x)}{(p-x)^{2}(p-r_{p}(x))^{2}}h_{p}(x)r_{p}'(x)
    \\&=
    p(1-p)\frac{(2p-x-r_{p}(x))(r_{p}(x)-x)}{(p-x)x(1-x)(p-r_{p}(x))^{2}}r_{p}'(x).
  \end{split}
  \end{equation*}
  A multiplication by \((p-x)x(1-x)(p-r_{p}(x))^{2}\) concludes the proof.
\end{proof}

The above lemma allows us to determine when \(r_{p}\) is convex or concave,
which in turn allows us to determine monotonicity of \(r_{p}'\).

\begin{lemma} \label{lemma:rmono} For \(p \in (0, 1)\) let \(r_{p} \colon [0, 1]
  \to [0, 1]\) be the function defined in Lemma~\ref{lemma:rprops}. If \(p \leq
  1/2\) then \(r_{p}\) is convex, if \(p \geq 1/2\) then \(r_{p}\) is concave.
\end{lemma}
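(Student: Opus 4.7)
The plan is to read off the sign of $r_p''(x)$ directly from Property~\ref{prop:rpropsreflectderiv2} of Lemma~\ref{lemma:rprops}, using the other properties of $r_p$ to pin down the signs of the remaining factors. The differential equation at our disposal is
\begin{equation*}
  (p-x)\,x(1-x)\,(p-r_{p}(x))^{2}\,r_{p}''(x)
  = p(1-p)\,(2p - x - r_{p}(x))\,(r_{p}(x) - x)\,r_{p}'(x),
\end{equation*}
and the goal is to show $r_p''(x) \geq 0$ for $p \leq 1/2$ (and $r_p''(x) \leq 0$ for $p \geq 1/2$) on $(0,1)$.

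First I would dispose of the trivial case $p = 1/2$: here $r_{1/2}(x) = 1-x$ (a consequence of Property~\ref{prop:rincp} combined with the symmetry of $f_{1/2}$, already noted in the proof of Property~\ref{prop:rpropsbasicskew}), which is both convex and concave. Then I would fix $p < 1/2$ and $x \in (0,1)$ with $x \neq p$, and tally signs in the identity above. The coefficient $x(1-x)(p-r_p(x))^2$ is strictly positive, so the left-hand side has the sign of $(p-x)\,r_p''(x)$. On the right-hand side, $p(1-p) > 0$ and $r_p'(x) < 0$ since $r_p$ is strictly decreasing (Property~\ref{prop:rbasics}). The factor $r_p(x)-x$ has the same sign as $p-x$, because $r_p(x) > x$ iff $x < p$ and $r_p(x) < x$ iff $x > p$ (again by Property~\ref{prop:rbasics} together with $r_p$ being decreasing with fixed point $p$). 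The decisive factor is $2p - x - r_p(x)$, and this is exactly where Property~\ref{prop:rpropsbasicskew} enters: for $p \leq 1/2$ one has $r_p(x) \geq 2p - x$, so $2p - x - r_p(x) \leq 0$. Putting the four signs on the right together gives $(+)(-)(\mathrm{sign}(p-x))(-) = \mathrm{sign}(p-x)$, which forced $(p-x)\,r_p''(x) = \mathrm{sign}(p-x)$ (up to a nonnegative factor), so $r_p''(x) \geq 0$ wherever $x \neq p$.

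The only remaining point is $x = p$, and I would handle it by continuity: $r_p$ is smooth by Lemma~\ref{lemma:rprops}, so $r_p''$ is continuous, and nonnegativity on $(0,1) \setminus \{p\}$ extends to all of $(0,1)$. This gives convexity for $p \leq 1/2$. The case $p \geq 1/2$ is obtained by the same computation, except that Property~\ref{prop:rpropsbasicskew} now reverses to give $2p - x - r_p(x) \geq 0$, flipping the sign on the right-hand side and yielding $r_p''(x) \leq 0$.

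I do not anticipate any real obstacle: the heavy lifting was done inside Lemma~\ref{lemma:rprops}, in deriving the second-order ODE and in establishing the inequality $r_p(x) \geq 2p - x$ for $p \leq 1/2$ (Property~\ref{prop:rpropsbasicskew}), which is the nontrivial input. The proof of Lemma~\ref{lemma:rmono} is then just a careful sign tabulation and a continuity argument at the fixed point.
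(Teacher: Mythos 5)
Your proof is correct and follows essentially the same route as the paper's: both read the sign of \(r_p''\) off the second-order differential equation of Property~\ref{prop:rpropsreflectderiv2}, using that \(r_p(x)-x\) and \(p-x\) share a sign, that \(r_p'<0\), and that Property~\ref{prop:rpropsbasicskew} controls the sign of \(2p-x-r_p(x)\), finishing at \(x=p\) by continuity. The only difference is cosmetic bookkeeping of which factors you cancel first.
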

\begin{proof}
  By Property~\ref{prop:rpropsreflectderiv2} we have
  \begin{equation*}
  \begin{split}
      &(p-x)x(1-x)(p-r_{p}(x))^{2}r_{p}''(x)
  \\&\quad=
  p(1-p)(2p - x - r_{p}(x))(r_{p}(x) - x)r_{p}'(x).
  \end{split}
  \end{equation*}
  Note that \(p-x\) has the same sign as \(r_{p}(x) - x\),
  \(x(1-x)(p-r_p(x))^{2} > 0\) outside of \(x = p\), and \(r_{p}'(x) < 0\). When
  \(x \neq p\) the sign of \(r_{p}''(x)\) is therefore the same as that of \(x +
  r_{p}(x) - 2p\).

  By Property~\ref{prop:rpropsbasicskew} of Lemma~\ref{lemma:rprops} we have for \(p < 1/2\) that
  \begin{equation*}
      x + r_{p}(x) - 2p \geq x + 2p - x - 2p = 0.
  \end{equation*}
  The inequality is reversed when \(p > 1/2\) and an equality when \(p = 1/2\).
  The case \(x = p\) follows by continuity.
\end{proof}

Having thus dealt with the factor \(-r'(x)\) in the statement of
Lemma~\ref{lemma:oddsexpect} we turn to the factor \(g(r(x))/g(x)\).

\begin{lemma} \label{lemma:hdeq} For \(p \in (0, 1)\) let \(r_{p} \colon [0, 1]
  \to [0, 1]\) be the function defined in Lemma~\ref{lemma:rprops} and let
  \(g(x) = cx^{a}(1-x)^{b}\) for \(x \in (0, 1)\) and \(a, b, c \in \R\) such
  that \(c \neq 0\) and \(a \neq -b\). Then \(h(x) = \frac{g(r_{p}(x))}{g(x)}\)
  satisfies the differential equation
    \begin{equation*}
        (p-r_{p}(x))x(1-x)h'(x)
        =
        (a+b)(q - p)(r_{p}(x) - x)h(x)
    \end{equation*}
    where \(q = a/(a+b)\).
\end{lemma}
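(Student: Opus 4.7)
My plan is to prove this by a direct computation with the logarithmic derivative of $h$, substituting the first-order ODE for $r_p$ established in Property~\ref{prop:rpropsreflectderiv} of Lemma~\ref{lemma:rprops}, and then simplifying algebraically.

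First I would take the logarithmic derivative of $h(x) = g(r_p(x))/g(x)$. Since $\log g(y) = \log|c| + a\log y + b\log(1-y)$ for $y \in (0,1)$, one obtains
\begin{equation*}
  \frac{h'(x)}{h(x)}
  =
  r_p'(x)\cdot\frac{a-(a+b)r_p(x)}{r_p(x)(1-r_p(x))}
  -
  \frac{a-(a+b)x}{x(1-x)}
  =
  (a+b)\Bigl[r_p'(x)\frac{q-r_p(x)}{r_p(x)(1-r_p(x))} - \frac{q-x}{x(1-x)}\Bigr],
\end{equation*}
where I have used $q = a/(a+b)$ (well-defined since $a \neq -b$).

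Next I would use Property~\ref{prop:rpropsreflectderiv} of Lemma~\ref{lemma:rprops}, which away from $x = p$ yields the substitution
\begin{equation*}
  \frac{r_p'(x)}{r_p(x)(1-r_p(x))} = \frac{p-x}{(p-r_p(x))\,x(1-x)}.
\end{equation*}
Inserting this into the previous expression and collecting both terms over the common denominator $x(1-x)(p-r_p(x))$ gives
\begin{equation*}
  \frac{h'(x)}{h(x)}
  =
  \frac{(a+b)}{x(1-x)(p-r_p(x))}\Bigl[(q-r_p(x))(p-x) - (q-x)(p-r_p(x))\Bigr].
\end{equation*}

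The final step is the algebraic simplification of the bracketed expression. Expanding both products, the terms $qp$ cancel and the mixed terms $r_p(x)\,x$ cancel, leaving
\begin{equation*}
  (q-r_p(x))(p-x) - (q-x)(p-r_p(x)) = (q-p)(r_p(x)-x).
\end{equation*}
Multiplying through by $x(1-x)(p-r_p(x))h(x)$ gives the claimed identity on the set where $x \neq p$; by continuity of both sides it then holds for all $x \in (0,1)$. There is no genuine obstacle here — the whole argument is a short symbolic calculation — and the only point requiring any attention is keeping track of signs in the cancellation on the last line.
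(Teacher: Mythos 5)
Your proposal is correct and follows essentially the same route as the paper: take the logarithmic derivative of \(h\), note that \(g\) has logarithmic derivative \((a+b)\frac{q-x}{x(1-x)}\), substitute the first-order ODE for \(r_{p}\) from Property~\ref{prop:rpropsreflectderiv}, and finish with the identity \((q-r_{p}(x))(p-x)-(q-x)(p-r_{p}(x))=(q-p)(r_{p}(x)-x)\). The handling of \(x=p\) (trivial in the paper, by continuity in yours) is an immaterial difference.
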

\begin{proof}
  First note that the equation is trivial for \(x = p\) and that \(g\) itself
  has a logarithmic derivative \(k(x) = (a+b) \frac{q - x}{x(1-x)}\) where \(q =
  a/(a+b)\) and \(a + b \neq 0\) by assumption. In other words, \(g'(x) =
  h(x)g(x)\). Now assume \(x \neq p\)
    \begin{equation*}
        h'(x)
        =
        \frac{g'(r_{p}(x))r_{p}'(x)g(x) - g(r_{p}(x))g'(x)}{g(x)^2}
        =
        (k(r_{p}(x))r_{p}'(x) - k(x))h(x).
    \end{equation*}
    using Property~\ref{prop:rpropsreflectderiv} from Lemma~\ref{lemma:rprops}
    we see
    \begin{equation*}
        k(r_{p}(x))r_{p}'(x)
        =
        \frac{(a+b)(q - r_{p}(x))r_{p}'(x)}{r_{p}(x)(1-r_{p}(x))}
        =
        \frac{(a+b)(q - r_{p}(x))(p-x)}{(p - r_{p}(x))x(1-x)}.
    \end{equation*}
    Rewriting
    \begin{equation*}
        k(x)
        =
        \frac{(a+b)(q-x)}{x(1-x)}
        =
        \frac{(a+b)(p-r_{p}(x))(q-x)}{(p-r_{p}(x))x(1-x)}
    \end{equation*}
    we see that
    \begin{equation*}
    \begin{split}
        &(p-r_{p}(x))x(1-x)h'(x)
        \\&\quad=
        (a+b)((q - r_{p}(x))(p-x) - (p-r_{p}(x))(q-x))h(x)
    \end{split}
    \end{equation*}
    so that it remains only to realise that \((q - r_{p}(x))(p-x) - (p-r_{p}(x))(q-x) = (q-p)(r_{p}(x) - x)\).
\end{proof}

Once again, this lemma allows us to directly read off monotonicity properties.

\begin{lemma} \label{lemma:hmono} For \(p \in (0, 1)\) let \(r_{p} \colon [0, 1]
  \to [0, 1]\) be the function defined in Lemma~\ref{lemma:rprops} and let
  \(g(x) = cx^{a}(1-x)^{b}\) for \(x \in (0, 1)\) and \(a, b, c \in \R\) such
  that \(c \neq 0\). Then
    \begin{enumerate}
    \item \label{lemma:hmono1} \(\frac{g(r_{p}(x))}{g(x)}\) is non-increasing if
      \(a \geq 0 \geq b\) (decreasing if either \(a > 0\) or \(0 > b\)),
    \item \label{lemma:hmono2} \(\frac{g(r_{p}(x))}{g(x)}\) is non-decreasing if
      \(a \leq 0 \leq b\) (increasing if either \(a < 0\) or \(0 < b\)),
    \item \label{lemma:hmono3} \(\frac{g(r_{p}(x))}{g(x)}\) is decreasing
      \((a+b)(a/(a+b) - p) > 0\),
    \item \label{lemma:hmono4} \(\frac{g(r_{p}(x))}{g(x)}\) is increasing
      \((a+b)(a/(a+b) - p) < 0\).
    \end{enumerate}
\end{lemma}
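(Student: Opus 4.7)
The plan is to read off the sign of $h'$ directly from the differential equation established in Lemma~\ref{lemma:hdeq}. For $a+b\neq 0$, that equation reads
\[
(p-r_p(x))\,x(1-x)\,h'(x) \;=\; (a+b)(q-p)(r_p(x)-x)\,h(x),
\]
with $q=a/(a+b)$. All of Parts~\ref{lemma:hmono3} and~\ref{lemma:hmono4}, and most of Parts~\ref{lemma:hmono1} and~\ref{lemma:hmono2}, should fall out of a sign analysis of this identity; a short supplementary argument will handle the boundary case $a+b=0$.

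For the sign analysis, I would first note that for $x\in(0,1)\setminus\{p\}$ one has $x(1-x)>0$, $h(x)>0$ (since $g$ has constant sign on $(0,1)$), and that $r_p(x)-x$ and $p-r_p(x)$ have opposite signs: $r_p$ is a decreasing involution fixing $p$ by Property~\ref{prop:rbasics}, so $x<p$ forces $r_p(x)>p$, yielding $(r_p(x)-x)(p-r_p(x))<0$, and analogously for $x>p$. Hence
\[
\operatorname{sign}(h'(x)) \;=\; -\operatorname{sign}\bigl((a+b)(q-p)\bigr).
\]
I would then record the algebraic identity $(a+b)(q-p) = a(1-p)-bp$, which is well-defined for all $a,b$. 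Parts~\ref{lemma:hmono3} and~\ref{lemma:hmono4} are now immediate: their hypotheses amount to $a(1-p)-bp>0$ and $<0$ respectively.

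For Parts~\ref{lemma:hmono1} and~\ref{lemma:hmono2}, I would observe that $a\ge 0\ge b$ gives $a(1-p)\ge 0$ and $-bp\ge 0$, so $a(1-p)-bp\ge 0$, with equality exactly when $a=b=0$ (using $p\in(0,1)$). When $a+b\neq 0$, this reduces to Part~\ref{lemma:hmono3}; when $a=b=0$, $g$ is constant and so is $h$. The only case not covered by Lemma~\ref{lemma:hdeq} is $a+b=0$ with $a\neq 0$ (e.g.\ $a>0$, $b=-a<0$ in Part~\ref{lemma:hmono1}). I would handle this by re-running the logarithmic derivative computation of Lemma~\ref{lemma:hdeq} directly: with $g'(x)/g(x)=a/(x(1-x))$ and Property~\ref{prop:rpropsreflectderiv}, one obtains
\[
(p-r_p(x))\,x(1-x)\,h'(x) \;=\; a\,(r_p(x)-x)\,h(x),
\]
which is exactly the $a+b=0$ limit of the main identity; the same sign analysis then concludes. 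Part~\ref{lemma:hmono2} is fully symmetric to Part~\ref{lemma:hmono1} (reverse the sign of $a(1-p)-bp$).

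There is no real obstacle here; the only wrinkle is the excluded case $a+b=0$ in Lemma~\ref{lemma:hdeq}, which is disposed of by the short direct computation above.
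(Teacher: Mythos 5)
Your proof is correct. For Parts~\ref{lemma:hmono3} and~\ref{lemma:hmono4} it coincides with the paper's argument: the same sign analysis of the identity from Lemma~\ref{lemma:hdeq}, using that \(p-r_{p}(x)\) and \(r_{p}(x)-x\) have opposite signs while \(x(1-x)\) and \(h\) are positive. Where you diverge is in Parts~\ref{lemma:hmono1} and~\ref{lemma:hmono2}: the paper does not route these through the differential equation at all, but simply observes that under \(a\geq 0\geq b\) the function \(g\) is non-decreasing (up to the irrelevant sign of \(c\)), so precomposing with the decreasing \(r_{p}\) makes the numerator non-increasing and the reciprocal of the denominator non-increasing, whence the quotient is non-increasing, with strictness if either inequality on \(a,b\) is strict. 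That direct argument sidesteps the excluded case \(a+b=0\) entirely, which is precisely the wrinkle that forces your supplementary computation. Your unified treatment is a legitimate alternative: the identity \((a+b)(q-p)=a(1-p)-bp\) correctly shows the hypotheses of Parts~\ref{lemma:hmono1}--\ref{lemma:hmono2} imply those of Parts~\ref{lemma:hmono3}--\ref{lemma:hmono4} except when \(a=b=0\) (where \(h\) is constant) or \(a+b=0\) with \(a\neq 0\), and your rederivation of the limiting identity \((p-r_{p}(x))x(1-x)h'(x)=a(r_{p}(x)-x)h(x)\) in that last case checks out against Property~\ref{prop:rpropsreflectderiv}. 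The trade-off is that you buy uniformity at the cost of an extra boundary case, while the paper buys brevity at the cost of two separate arguments.
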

\begin{proof}
  Recalling that \(r_{p}\) is decreasing and observing that \(g\) is
  non-decreasing if \(a \geq 0 \geq b\) (increasing if either inequality is
  strict) and non-increasing if \(a \leq 0 \leq b\) (decreasing if either
  inequality is strict) we see that
  statements~\ref{lemma:hmono1}~and~\ref{lemma:hmono2} follow immediately.

    For the remaining statements Lemma~\ref{lemma:hdeq} tells us that
    \begin{equation*}
        (p-r_{p}(x))x(1-x)h'(x)
        =
        (a+b)(q - p)(r_{p}(x) - x)h(x)
    \end{equation*}
    where \(q = a/(a+b)\). Since \(h(x), x, (1-x) > 0\) and \(p-r_{p}(x)\) has
    the opposite sign to \(r_{p}(x) - x\) we see that \(h'(x)\) has the opposite
    sign to \((a+b)(q-p)\).
\end{proof}

Lemma~\ref{lemma:rmono}~and~\ref{lemma:hmono} give us a number of conditions
under which \(-r_{p}(x)g(r_{p}(x))/g(x)\) is monotone. Namely, when \(p \leq
1/2\) and either \(a \geq 0 \geq b\) or \((a+b)(a/(a+b) - p) > 0\) or when \(p
\geq 1/2\) and either \(a \leq 0 \leq b\) or \((a+b)(a/(a+b) - p) < 0\). These
are simply the cases where \(-r_{p}(x)\) and \(g(r_{p}(x))/g(x)\) are both
non-increasing or both non-decreasing. One might be tempted to try to deduce something
for the ``mixed'' cases by directly differentiating the product
\(-r_{p}(x)g(r_{p}(x))/g(x)\) and then using
Lemma~\ref{lemma:rprops}~and~\ref{lemma:hdeq} similar to the proofs of
Lemma~\ref{lemma:rmono}~and~\ref{lemma:hmono}. Sadly most or all of these cases
appear to be genuinely non-monotonic and other means must be found to generalise
the result.

The following proof is now simple.
\begin{proof}[Proof of Theorem~\ref{thm:betamono}]
  Let \(a, b\), and \(p\) be as in the statement and consider \(m > n >
  -\min(a/p, b/(1-p))\). Define \(W_{n}\) as in the statement and \(W_{m}\)
  analogously.

  Define \(r = r_{p}\) as in Lemma~\ref{lemma:rprops} and let \(f(x) =
  x^{p}(1-x)^{1-p}\). Write the (Lebesgue) densities of the distributions of
  \(W_{n}\) and \(W_{m}\) as \(f_{n}(x) = c_{n}f(x)^{n}g(x)\) and \(f_{m}(x) =
  c_{m}f(x)^{m}g(x)\), respectively, where \(g(x) = x^{a-1}(1-x)^{b-1}\) and
  \(c_{n}, c_{m}\) are appropriate normalising constants.

  By Lemma~\ref{lemma:powerdist3} we have for all \(t\) that \(\prob(W_{m} > t
  \mid W_{m} > p) \leq \prob(W_{n} > t \mid W_{n} > p)\). In particular, it
  follows that \(\expect(h(W_{m}) \mid W_{m} > p) \leq \expect(h(W_{n}) \mid
  W_{n} > p)\) for any non-decreasing \(h\), with the inequality reversed for
  any non-increasing \(h\). Consider the case \(h(x) = -r'(x)g(r(x))/g(x)\). We
  have by Lemma~\ref{lemma:oddsexpect} that \(\prob(W_{m} \leq p) \leq
  \prob(W_{n} \leq p)\) if \(h\) is non-decreasing and the reverse inequality if
  \(h\) is non-increasing.

  To conclude the proof, we simply consider combinations of values of \(p\),
  \(a\), and \(b\) such that \(-r'(x)\) and \(g(r(x))/g(x)\) are either both
  non-increasing or both non-decreasing using
  Lemma~\ref{lemma:rmono}~and~Lemma~\ref{lemma:hmono}. If \(p \leq 1/2\), then
  \(r\) is convex, so that \(-r'\) is non-increasing. For \(g(r(x))/g(x)\) to
  also be non-increasing we need either \(a \geq 1 \geq b\) or
  \((a+b-2)((a-1)/(a+b-2) - p) > 0\). Similarly, for the case \(p \geq 1/2\) we
  need \(b \geq 1 \geq a\) or \((a+b-2)((a-1)/(a+b-2) - p) < 0\).
\end{proof}

\printbibliography

\end{document}